\documentclass[11pt]{article}

\usepackage{appendix}
\usepackage{mathtools}
\usepackage[T1]{fontenc}
\usepackage{amsfonts}
\usepackage{amsmath}
\usepackage{amssymb}
\usepackage{amsthm}
\usepackage{bbm}
\usepackage{bm}
\usepackage{mathrsfs}
\usepackage{xcolor} 
\usepackage{pdfsync}
\usepackage{enumitem}

\usepackage{tikz}

\newcommand{\RR}{\mathbb{R}}
\newcommand{\R}{\RR}
\newcommand{\Z}{\mathbb{Z}}

\newcommand{\NN}{\mathbb{N}}

\newcommand{\eps}{\varepsilon}

\usepackage[margin=31mm]{geometry}
\newcommand{\mykill}[1]{}

\newcommand{\e}{\varepsilon}
\newcommand{\dd}{\mathrm d}

\usepackage[colorlinks=true,
            linkcolor=blue,
            citecolor=blue,
            urlcolor=blue]{hyperref}

\usepackage[nameinlink,capitalize,noabbrev]{cleveref}

\theoremstyle{plain}
\newtheorem{theorem}{Theorem}[section]
\newtheorem{proposition}[theorem]{Proposition}
\newtheorem{lemma}[theorem]{Lemma}
\newtheorem{corollary}[theorem]{Corollary}

\theoremstyle{definition}

\newtheorem{remark}[theorem]{Remark}

\crefname{theorem}{theorem}{Theorems}
\Crefname{theorem}{Theorem}{Theorems}

\crefname{lemma}{lemma}{lemmas}
\Crefname{lemma}{Lemma}{Lemmas}

\crefname{proposition}{proposition}{propositions}
\Crefname{proposition}{Proposition}{Propositions}

\crefname{corollary}{Corollary}{corollaries}
\Crefname{corollary}{Corollary}{Corollaries}

\crefname{definition}{definition}{definitions}
\Crefname{definition}{Definition}{Definitions}

\crefname{remark}{remark}{remarks}
\Crefname{remark}{Remark}{Remarks}

\crefname{example}{example}{examples}
\Crefname{example}{Example}{Examples}

\crefname{assumption}{assumption}{assumptions}
\Crefname{assumption}{Assumption}{Assumptions}

\crefname{section}{section}{sections}
\Crefname{section}{Section}{Sections}

\crefname{equation}{equation}{equations}
\Crefname{equation}{Equation}{Equations}

\newlist{myenum}{enumerate}{3}
\setlist[myenum,1]{label={\rm (H\arabic*)},
                   ref  ={\rm (H\arabic*)}}

\crefname{myenumi}{property}{properties}
\Crefname{myenumi}{Property}{Properties}

\renewenvironment{thebibliography}[1]{%
\begin{oldthebibliography}{#1}%
\setlength{\baselineskip}{.9em}
\linespread{1}
\small
\setlength{\parskip}{.40ex}%
\setlength{\itemsep}{.30em}%
}%
{%
\end{oldthebibliography}%
}

\newcommand{\supp}{\mathrm{supp}}
\NewDocumentCommand{\esssup}{o}{%
  \operatorname*{ess\,sup}_{\IfValueT{#1}{#1}}%
}
\begin{document}

\title{\Huge Sharp local sparsity of regularized optimal transport}
\date{\today}
\author{  
  Alberto Gonz{\'a}lez-Sanz%
  \thanks{Department of Statistics, Columbia University, \textcolor{blue}{ag4855@columbia.edu}} \and  Rishabh S.  Gvalani \thanks{School of Mathematics, University of Edinburgh, \textcolor{blue}{rgvalani@ed.ac.uk}} \and Lukas Koch \thanks{Department of Mathematics, University of Sussex,  \textcolor{blue}{lukas.koch@sussex.ac.uk}}
  }
  
\maketitle \vspace{-1.5em}
\begin{abstract}

In recent years, the use of entropy-regularized optimal transport with $L^p$-type entropies has become increasingly popular. In this setting, the solutions are sparse, in the sense that the support of the regularized optimal coupling, $\mathrm{supp}(\pi_\varepsilon)$, shrinks to the support of the original optimal transport problem as $\varepsilon \to 0$.

The main open question concerns the rate of this convergence. In this paper, we obtain sharp local results away from the boundary. We prove that the supports $\mathrm{supp}(\pi_\varepsilon(\cdot \mid x))$ of the conditional measures, $\pi_\varepsilon(\cdot \mid x)$, behave like balls of radius $\e^\frac 1 {d(p-1)+2}$. This allows us to show that the regularized potentials are uniformly strongly convex and to derive the rate of convergence of these potentials toward their unregularized limit. Our results generalize the results of (Gonz\'alez-Sanz and Nutz, SIAM J.~Math.~Anal.) and (Wiesel and Xu, Ibid) to the multivariate case and beyond the case of self-transport.

\end{abstract}

 \vspace{1em}

{\small
\noindent \emph{Keywords}:  
Optimal Transport; Quadratic Regularization; Regularized Optimal Transport; Sparsity
\noindent \emph{AMS 2020 Subject Classification}: 49N10; 49N05; 90C25

}
 \vspace{0em} 
 \section{Introduction}
The optimal transport (OT) problem between two probability measures $\lambda$ and $\mu$ is defined as (cf.~\cite{villani2008optimal}) 
\begin{align}\label{OT}\tag{OT}
{\rm OT}:= \inf_{\pi\in \Pi(\lambda,\mu)} \int \frac{1}{2}\|x-y\|^2 \;\dd\pi, 
 \end{align}
 where $\pi\in \Pi(\lambda,\mu)$ if $\pi(\cdot \times \R^d)=\lambda$ and $\pi(\R^d\times \cdot )=\mu$.  
In this paper  we consider the problem
\begin{align}\label{ROT}\tag{ROT}
{\rm ROT}_{\eps,p}:= \inf_{\pi\in \Pi(\lambda,\mu),  \pi\ll \lambda\otimes \mu} \int \frac{1}{2} \|x-y\|^2 \;\dd\pi + \e \int h_p\left(\frac{\dd\pi}{\dd(\lambda\otimes \mu)}\right) \dd(\lambda\otimes \mu), 
 \end{align}
 where $h_p(z) = \frac{|z|^p-1}{p-1}$ for $p\in (1,2]$, $\lambda\otimes \mu$ denotes the product measure and $\ll$ stands for absolute continuity. A minimizer of \eqref{ROT} (resp.~\eqref{OT})  is called ROT  plan (resp.~OT plan) and denoted by $\pi_\eps$ (resp.~$\pi_0$).  
Recently, ROT has gained popularity as an alternative to entropic optimal transport (EOT). 
Recent results show that ROT can be computed in linear time \cite{GonzalezSanzNutzRiveros.25} and avoids the curse of 
dimensionality \cite{GonzalezSanzEcksteinNutz.25,GonzalezSanzDelBarrioNutz.25}---exactly the same as EOT \cite{MenaWeed.2019.Nips}. However, the transport plans of EOT and ROT 
are quite different. While the EOT plan has maximal support, ROT plans have sparse support \cite{zhang.2023.manifoldlearningsparseregularised}. 
This is fundamentally because ROT plans $\pi_\eps$ have density with respect to the product measure 
given by (see \cite{Nutz.QOT.2024,BayraktarEckstein.2025.BJ,GonzalezSanzEcksteinNutz.25})
\begin{equation} \label{eq:density-ROT}
\rho_{\varepsilon}(x,y)=\frac{d \pi_\eps}{ d(\lambda \otimes \mu)}(x,y) = \frac{1}{\eps^{q-1}q^{q-1}}\left( f_{\varepsilon}(x) + g_{\varepsilon}(y) - \frac{1}{2}\|x-y\|^2\right)_{+}^{q-1},
\end{equation}
where \((f_{\varepsilon}, g_{\varepsilon})\) are the solutions of the dual problem 
\begin{equation}\label{Dual} \tag{D-ROT}
{\rm ROT}^*_{\eps,p}=\sup_{a,b}\int a(x)+b(y)-\frac{1}{\eps^{q-1} q^q}
\left({a(x)+b(y)-\frac12\|x-y\|^2}\right)_+^{q}\dd\lambda(x)\dd\mu(y).\\
\end{equation}
Here $q$ denotes the Young conjugate of $p$, i.e., $q=\frac{p}{p-1}$. 
In EOT, the solutions are of the form 
\begin{equation} \label{eq:density-EOT}
\pi_{\varepsilon}^{EOT} (x,y) \dd x \dd y = e^{\frac{f_{\varepsilon}^{EOT}(x) + g_{\varepsilon}^{EOT}(y) - \frac{1}{2}\|x-y\|^2}{\eps} }\, \dd(\lambda \otimes \mu)(x,y) 
\end{equation}
with \((f_{\varepsilon}^{EOT}, g_{\varepsilon}^{EOT})\) being the dual solutions of EOT. The positive part in 
\eqref{eq:density-ROT} allows the density to vanish and the support to narrow. In view of \eqref{eq:density-EOT}, the density of the EOT plan cannot vanish. Indeed, under some conditions (see \cite{Nutz.QOT.2024}) the support of $\pi_\eps$ converges to the support of the OT plan. In this paper we  characterize the rate of convergence at which the diameters of the sections 
$$\mathcal{S}_x=  \{y: \rho_\eps(x,y)>0\} \quad \text{and} \quad  \mathcal{T}_y=  \{x: \rho_\eps(x,y)>0\}  $$  decrease. This problem has been studied by \cite{GonzalezSanzNutz2024.Scalar} and \cite{WieselXu.24},  where the sharp rates have been obtained for univariate data and general marginals as well as multivariate data but  $\mu=\lambda$. In this paper we obtain, under mild conditions, that, for $x$ in the interior ${\rm int}(\Omega_0)$ of the support $\Omega_0$ of $\lambda$,
$$ \mathbb{B}\left(\nabla \varphi_\e(x),\frac 1 {R_0} \e^\frac 1 {d(p-1)+2}\right)\subset \mathcal S_x\subset \mathbb{B}\left(\nabla \varphi_\e(x), R_0 \e^\frac 1 {d(p-1)+2}\right) , $$
where $R_0$ depends on the distance from $x$ to the boundary of $\Omega_0$. Here
\begin{align}\label{eq:derivative}
\begin{cases}
    \nabla \varphi_\e(x) = \frac{\int_{\mathcal S_x} y\xi(x,y)^{q-2}\;\dd\mu(y)}{\int_{\mathcal S_x} \xi(x,y)^{q-2}\;\dd\mu(y)},\\
     \nabla \psi_\e(y) = \frac{\int_{\mathcal T_y} x\xi(x,y)^{q-2}\;\dd\lambda(x)}{\int_{\mathcal T_y} \xi(x,y)^{q-2}\;\dd\lambda(x)},
\end{cases} \quad {\rm for}\  \xi(x,y)=\langle x,y \rangle-\varphi_\e(x) -\psi_\e (y),  
\end{align}
are the gradients of the convex functions $(\varphi_\e,\psi_\e) =(\|\cdot\|^2/2-f_\eps,\|\cdot\|^2/2- g_\eps)$ (cf.~\cite{WieselXu.24} or \cite{GvalaniKoch2026} for the convexity of $\varphi_\e$ and $\psi_\e$).  This result is  \Cref{thm:sparsity}, which is proved in \Cref{Section:proof-main}. In light of the examples in \Cref{sec:Explicit}, the rates we obtain are sharp. Our proof relies on the uniform bound on 
$ \|\nabla^2 \varphi_\e \|_{L^\infty_{\rm loc}}\leq C$ derived in \cite{GvalaniKoch2026}. We underline that, with our proof techniques, a uniform bound  $ \|\nabla^2 \varphi_\e \|_{L^\infty}\leq C$ would generalize most of our local-type results  up to the boundary.

Our second contribution is to show that the ROT convex potentials  $ \varphi_\e $ are uniformly strongly convex in ${\rm int}(\Omega_0)$. In particular, \Cref{Corollary:InteriorStrongConvexity} states that, for all $x\in {\rm int}(\Omega_0)$,
$$ \inf_{\|h\|=1} \langle \nabla^2 \varphi_\e(x) h ,h \rangle \geq \frac{1}{C}, $$
where $C$ depends on the distance from $x$ to the boundary of $\Omega_0$. Our last contribution (cf.~\Cref{Coro:Rates-map}) shows that 
for every $K_0\Subset \Omega_0$, there exists a constant $C>0$ such that, for every $\eps\in (0,1]$,  
$$ \|\nabla \varphi_\eps- \nabla \varphi\|_{L^2(K_0)}   \leq \|  {\rm d}(\mathcal{S}_{(\cdot)}, \nabla \varphi) \|_{L^2(K_0)} \leq C \e^\frac 1 {d(p-1)+2},   $$ 
 where $\nabla \varphi$ denotes the OT map from $\lambda$ to $\mu$ and ${\rm d}(\mathcal{S}_{(\cdot)}, \nabla \varphi)$ denotes the distance between $\mathcal{S}_{(\cdot)}$ and $\nabla \varphi$. 
\paragraph{Organization} The remainder of the paper is organized as follows. The notation section is \Cref{Section:notation}. The statements of our main result and its corollaries are presented in  \Cref{Section:main-results}.  \Cref{Sect:preliminar}  states some known preliminary results.  In  \Cref{Section:proof-main} we prove \Cref{thm:sparsity} and in \Cref{Section:proof-of-corollary} we prove \Cref{Corollary:InteriorStrongConvexity}. Finally, we provide explicit solutions for \eqref{ROT} in the case of self-transport on the torus in \Cref{sec:Explicit}. Omitted proofs are contained in \Cref{appendix}.
\section{Notation}\label{Section:notation} We write $A\Subset B $ if there exists a compact set $K$ such that $A\subset K \subset {\rm int} \, B$, where ${\rm int}\, B$ stands for the Euclidean interior. The Euclidean closure of $A$ is denoted as $\overline{A}$.  A smooth domain is a set $A$ with $\mathcal{C}^\infty$ boundary $\partial A$. The open ball of center $x$ and radius $\delta>0$ is denoted by $\mathbb{B}(x,\delta)$. The support of a probability measure $\mu$ is denoted by ${\rm supp}\, \mu$.   We recall that the \emph{essential supremum of $f$ on $K_0$} is defined by
\[
\operatorname*{ess\,sup}_{x\in K_0} f(x)
:= \inf \Big\{ M \in \mathbb{R} \cup \{+\infty\} \;:\;
f(x) \le M \ \text{for Lebesgue-a.e. } x \in K_0 \Big\}.
\]
We use the convention $(t)_+^0={\bf 1}_{t\geq 0}$. For $p\in (1,\infty)$, we set $q=p^\prime$, where $\frac 1 p+\frac 1 {p^\prime}=1$. 

In this work, we will use the notation $A_\eps \lesssim B_\eps$ to mean that there exists a constant $C$, independent of $\eps$, such that $A_\eps \leq C B_\eps$ for $\eps$ small enough. We will use $A_\eps \approx B_\eps$ when $A_\eps \lesssim B_\eps$ and $B_\eps \lesssim A_\eps$. For a set $E \subset \mathbb{R}^d$, we define its diameter by $\operatorname{diam}(E) := \sup\{ |x-y| : x,y \in E\}.$

Given a probability measure $\mu$ on $\mathbb{R}^d$, we define its support, denoted by $\operatorname{supp}(\mu)$, as the smallest closed set $F \subset \mathbb{R}^d$ such that $\mu(\mathbb{R}^d \setminus F)=0.$

\section{Main results}\label{Section:main-results}
In this paper, we assume that $\lambda$ and $\mu$ are probability measures with compact supports $\Omega_0$ and $\Omega_1$, respectively. We denote
\[
\Omega_0 := \supp\, \lambda, 
\qquad 
\Omega_1 := \supp\, \mu.
\]
We further assume that both $\lambda$ and $\mu$ admit $C^{0,\alpha}$ densities, for some $\alpha>0$, which are bounded above and bounded away from zero on their supports. Finally, we assume that the optimal transport map $T = \nabla \varphi : \Omega_0 \to \Omega_1$ between $\lambda$ and $\mu$ is bi-$C^{1,\alpha}$, that is, $T$ is a $C^{1,\alpha}$ diffeomorphism with $C^{1,\alpha}$ inverse.  In particular, due to \cite{Caffarelli,Chen.et.al.2021.AoM}, all our results hold when $\Omega_0,\Omega_1$ are $\mathcal{C}^2$ convex domains.

Our main result, showing the precise sparsity for the section $\mathcal{S}_x=\{y: \rho_\eps(x,y)>0\}$   is the following. The same result holds for $  \mathcal{T}_y=  \{x: \rho_\eps(x,y)>0\} $.
\begin{theorem}[Interior sharp sparsity]\label{thm:sparsity}
For any smooth domain $K_0\Subset \Omega_0$, there exists $R_0=R_0(K_0)$ and $\e_0=\e_0(K_0)$ such that for every $x\in K_0$ and $\e\in (0,\e_0]$,
\begin{align}\label{eq:mainInequality}
\mathbb{B}\left(\nabla \varphi_\e(x),\frac 1 {R_0} \e^\frac 1 {d(p-1)+2}\right)\subset \mathcal S_x\subset \mathbb{B}\left(\nabla \varphi_\e(x), R_0 \e^\frac 1 {d(p-1)+2}\right). 
\end{align}
If $\Omega_1$ is convex, the result holds for any $\e\in (0,1]$.
\end{theorem}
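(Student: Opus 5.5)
We work with the convex potentials $\varphi_\e,\psi_\e$ and $\xi(x,y)=\langle x,y\rangle-\varphi_\e(x)-\psi_\e(y)$, so that $\rho_\e=c_\e\,\xi_+^{q-1}$ and $\mathcal S_x=\{y:\xi(x,y)>0\}$. The two ingredients are the marginal constraint and a two-sided quadratic control of $\xi$. The marginal constraint reads
\begin{equation*}
\int_{\mathcal S_x}\xi(x,y)^{q-1}\,\dd\mu(y)=\e^{q-1}q^{q-1}
\qquad\text{for $\lambda$-a.e.\ } x .
\end{equation*}
For the quadratic control, fix $x\in K_0$ and set $y_0=\nabla\varphi_\e(x)$. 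By convexity of $\psi_\e$ (Legendre-type duality), the function $y\mapsto\xi(x,y)$ is concave. The first-order condition behind \eqref{eq:derivative} (differentiate the constraint in $x$) says that $y_0$ is the $\xi^{q-2}$-weighted barycenter of $\mathcal S_x$. The plan is to show that $\xi(x,\cdot)$ behaves like $M_\e-\frac12\langle A(y-y_0),y-y_0\rangle$ on $\mathcal S_x$, with $A$ uniformly elliptic. We then balance the height $M_\e=\sup_y\xi(x,y)$ against the radius $r_\e$ of $\mathcal S_x$.

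\textbf{Step 1 (quadratic bounds on $\xi$).} The uniform local bound $\|\nabla^2\varphi_\e\|_{L^\infty_{\rm loc}}\le C$ from \cite{GvalaniKoch2026} gives the $y$-analogue through the symmetric statement for $\psi_\e$ on the relevant compact set $\nabla\varphi_\e(K_0)$. This yields the upper concavity bound $\xi(x,y)\ge \xi(x,y^*)-\frac C2|y-y^*|^2$, where $y^*$ is the maximizer. For the reverse inequality $\xi(x,y)\le M_\e-\frac1{2C}|y-y^*|^2$, I would instead use that $\xi(\cdot,y)\le$ (something) together with $\nabla^2\varphi_\e\le C$. Concretely, $\xi(x,y)>0$ and $\xi(x',y)\le$ its tangent bound imply that $\mathcal T_y$ contains a ball of size comparable to $\sqrt{\xi(x,y)}$. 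Integrating the marginal constraint in $y$ over $\mathcal T_y$ forces $\xi$ to be small, at level $\e^{(q-1)\cdot}$. This should give $M_\e\to0$ and, through semiconvexity/duality, a lower curvature bound in $y$ as well. I would also show $|y^*-y_0|\lesssim r_\e$ from the barycenter identity, since the weight $\xi^{q-2}$ is comparable on most of the support when $\xi$ is quadratically controlled. For $q<2$ the weight is singular but integrable, and this case needs care.

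\textbf{Step 2 (scaling).} With $\xi(x,\cdot)$ sandwiched between $M_\e-C|y-y_0|^2$ and $M_\e-C^{-1}|y-y_0|^2$, the set $\mathcal S_x$ lies between balls of radius $\approx\sqrt{M_\e}$. Since $\mu$ has density bounded above and below, and $y_0$ is interior for $x\in K_0$ and $\e$ small (using convergence $\nabla\varphi_\e\to\nabla\varphi$ and $\nabla\varphi(K_0)\Subset\Omega_1$), the constraint gives
\begin{equation*}
\int_{\mathcal S_x}\xi^{q-1}\,\dd\mu\approx M_\e^{q-1}\,M_\e^{d/2}\approx\e^{q-1}.
\end{equation*}
Hence $M_\e\approx \e^{\frac{2(q-1)}{2(q-1)+d}}$ and $r_\e\approx\sqrt{M_\e}=\e^{\frac{q-1}{2(q-1)+d}}=\e^{\frac1{d(p-1)+2}}$, using $q-1=1/(p-1)$. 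When $\Omega_1$ is convex, the ball intersected with $\Omega_1$ still carries a fixed fraction of its volume, so $\e_0$ can be taken to be $1$.

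\textbf{Main obstacle.} The hard part is the lower curvature bound $\xi(x,y)\le M_\e-c|y-y^*|^2$, i.e.\ uniform strong concavity of $\xi(x,\cdot)$. This is essentially uniform strong convexity of $\varphi_\e$, and an upper Hessian bound on $\psi_\e$ does not give it directly. My first attempt would be an a priori diameter bound. Suppose $\mathcal S_x$ contains a long segment. Then, by the Hessian bound for $\varphi_\e$ in $x$, each $y$ on that segment has $\mathcal T_y$ containing a ball around $x$ of radius $\gtrsim\sqrt{\xi(x,y)}$. Integrating the $y$-marginal constraint over this family and comparing with the $x$-marginal would produce excess mass, contradicting the constraint. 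Bootstrapping this gives $\operatorname{diam}\mathcal S_x\lesssim\sqrt{M_\e}$, which is all Step 2 actually needs.
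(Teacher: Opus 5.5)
\textbf{Genuine gap: the outer containment is never proved.} Your Step 1 lower bound $\xi(x,y)\ge M_\e-C\|y-y^\ast\|^2$ matches the paper, and so does the resulting smallness $M_\e\lesssim\e^{\frac{2}{d(p-1)+2}}$. But the outer containment, which you correctly flag as the crux, is left open, and your bootstrap does not work. The semiconvexity bound from $\nabla^2\varphi_\e\le L$ reads
\[
\xi(x',y)\ge \xi(x,y)+\langle x'-x,\,y-\nabla\varphi_\e(x)\rangle-\tfrac L2\|x'-x\|^2 .
\]
For $y\in\mathcal S_x$ at distance $\ell$ from $\nabla\varphi_\e(x)$, the linear term has size $\ell\|x'-x\|$. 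So $\mathcal T_y$ is \emph{not} guaranteed to contain a ball of radius $\sqrt{\xi(x,y)}$ around $x$, and this fails exactly for the far-away $y$ you want to exclude. The ``excess mass'' comparison is also never made quantitative: integrating $\xi_+^{q-1}$ against $\lambda\otimes\mu$ gives the same total whichever marginal you start from. Finally, strong concavity of $\xi(x,\cdot)$ is strong convexity of $\psi_\e$, not $\varphi_\e$. In the paper, uniform strong convexity of the potentials is derived \emph{from} this theorem (\Cref{Corollary:InteriorStrongConvexity}), so it cannot be an input.

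\textbf{The missing idea: duality in the other variable.} You do not need strong concavity around the $y$-maximizer. Since $\nabla^2\varphi_\e\le L$, the conjugate $\varphi_\e^\ast$ is $\frac1L$-strongly convex (\Cref{lem:convex}). Splitting off the Fenchel gap gives
\[
\xi(x,y)=\bigl[\varphi_\e^\ast(y)-\psi_\e(y)\bigr]-\bigl[\varphi_\e(x)+\varphi_\e^\ast(y)-\langle x,y\rangle\bigr]\le \max_{x'}\xi(x',y)-\tfrac{1}{2L}\|y-\nabla\varphi_\e(x)\|^2 .
\]
Your Step 1 argument, transposed to the $\lambda$-marginal at fixed $y$, gives $\max_{x'}\xi(x',y)\lesssim\e^{\frac{2}{d(p-1)+2}}$. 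Hence $\mathcal S_x\subset\mathbb B(\nabla\varphi_\e(x),R_0\e^{\frac{1}{d(p-1)+2}})$ follows directly. In your own language: move from $x$ along $\nabla_x\xi(x,y)$ for length $\ell/L$ to reach a point where $\xi(\cdot,y)\ge\ell^2/(2L)$. Then apply the marginal constraint at that single $y$; no family of $y$'s is needed.

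\textbf{A second gap: centering the inner ball.} The bound $\|y^\ast-\nabla\varphi_\e(x)\|\lesssim r_\e$ does not let you recenter the lower quadratic bound at $\nabla\varphi_\e(x)$, since that point could a priori lie near $\partial\mathcal S_x$. You need $\xi(x,\nabla\varphi_\e(x))\gtrsim\e^{\frac{2}{d(p-1)+2}}$. The paper gets this from Jensen's inequality, using concavity of $\xi(x,\cdot)$ on the convex set $\mathcal S_x$ and the barycenter formula \eqref{eq:derivative}. This is combined with $\int\xi^{q-2}\dd\mu\le(\max\xi)^{q-2}|\mathcal S_x|$, which uses the outer bound and $q\ge 2$ (since $p\le2$, the singular-weight case you worry about never arises). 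The paper then Taylor-expands around $\nabla\varphi_\e(x)$ with $\nabla^2\psi_\e\le L$. The linear term $\langle x-\nabla\psi_\e(\nabla\varphi_\e(x)),\,y-\nabla\varphi_\e(x)\rangle$ is controlled through the outer containment of $\mathcal T_{\nabla\varphi_\e(x)}$ and Young's inequality.
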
 

\Cref{thm:sparsity} states that the sections $\mathcal S_x$ behave like balls of radii approximately $\e^\frac 1 {d(p-1)+2}$. In light of the examples in \Cref{sec:Explicit} this rate is sharp. The proof uses the estimate
$ \esssup[K_0] |\nabla^2 \varphi_\e | \lesssim 1$ shown in  \cite{GvalaniKoch2026}. 
In the following result we show that the restriction $\varphi_\eps\vert_{K_0}$ of  $\varphi_\eps$ to $K_0\Subset \Omega_0$ is uniformly strongly convex as $\eps\to 0$.  
\begin{corollary}[Interior strong convexity]\label[corollary]{Corollary:InteriorStrongConvexity}
    Let the assumptions of  \Cref{thm:sparsity} hold. Then for every $K_0\Subset \Omega_0$ there exists  $C,\eps_0>0$, such that $\varphi_\eps\vert_{K_0} $ is $C$-strongly convex on any convex subset of $K_0$  for all $\eps\in (0,\eps_0]$. 
\end{corollary}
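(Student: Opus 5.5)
The plan is to differentiate the formula \eqref{eq:derivative} for $\nabla\varphi_\eps$ in $x$ and read off the Hessian as a weighted covariance of $y$ over the section $\mathcal S_x$. Then \Cref{thm:sparsity} gives a lower bound on that covariance, which matches the size of the normalizing factor.

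\textbf{Step 1: the Hessian formula.} Write $w(x,y)=\xi(x,y)^{q-2}\mathbf 1_{\mathcal S_x}(y)$, with $\xi>0$ exactly on $\mathcal S_x$. The marginal constraint $\int \rho_\eps(x,y)\,\dd\mu(y)=1$, written in terms of $\xi$, reads $\int \xi(x,y)_+^{q-1}\dd\mu(y)=\eps^{q-1}q^{q-1}$. Differentiating it in $x$ gives \eqref{eq:derivative}. Differentiating $\int \xi_+^{q-1}(y-\nabla\varphi_\eps(x))\,\dd\mu(y)=0$ once more, and using $\nabla_x\xi=y-\nabla\varphi_\eps(x)$, gives for a.e. $x$
\begin{align*}
\nabla^2\varphi_\eps(x)\,\int w(x,y)\,\dd\mu(y)=(q-2)\int \xi_+^{q-3}\,(y-\nabla\varphi_\eps(x))\otimes(y-\nabla\varphi_\eps(x))\,\dd\mu(y).
\end{align*}
This holds for $q>2$. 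For $q\le 2$ (that is, $p\ge 2$, which only occurs at $p=2$), one instead takes the distributional version, and the right-hand side becomes a boundary integral over $\partial\mathcal S_x$; I would treat that case separately. To justify the differentiation I would use $\varphi_\eps\in C^{1,1}_{\rm loc}$ with $\esssup_{K_0}|\nabla^2\varphi_\eps|\lesssim1$ from \cite{GvalaniKoch2026}, together with the convexity of $\varphi_\eps$.

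\textbf{Step 2: lower bound on the covariance.} Set $r=\eps^{1/(d(p-1)+2)}$. By \Cref{thm:sparsity}, $\mathcal S_x$ contains $\mathbb B(\nabla\varphi_\eps(x),r/R_0)$ and is contained in $\mathbb B(\nabla\varphi_\eps(x),R_0r)$. The density of $\mu$ is bounded above and below. Since $x\in K_0$ and the transport map is a bi-$C^{1,\alpha}$ diffeomorphism, $\nabla\varphi_\eps(x)$ lies at positive distance from $\partial\Omega_1$ for small $\eps$, so the inner ball sits inside $\Omega_1$.

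I also need bounds on $\xi$ itself. Because $\nabla_y\xi=x-\nabla\psi_\eps(y)$ and $\psi_\eps$ has locally bounded Hessian (the symmetric statement), $\xi$ is Lipschitz in $y$ with constant $\lesssim r$ on $\mathcal S_x$, since it vanishes on the boundary. This gives $\xi\lesssim r^2$. Combined with the mass constraint $r^d\cdot\sup\xi^{q-1}\gtrsim\eps^{q-1}$, it also gives $\xi\gtrsim r^2$ on the smaller ball $\mathbb B(\nabla\varphi_\eps(x),r/(2R_0))$; this is the step that needs the most care. Concretely, a standard interior-lower-bound argument, using that the maximum of $\xi$ is of order $r^2$ and the gradient is of order $r$, shows that $\xi$ is of order $r^2$ on a ball of radius of order $r$.

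\textbf{Step 3: conclusion.} For a unit vector $h$, the right-hand side of the Step 1 identity, tested against $h\otimes h$, is at least $r^{2(q-3)}\cdot r^2\cdot r^d$, since $\langle y-\nabla\varphi_\eps(x),h\rangle^2\gtrsim r^2$ on a positive fraction of the small ball. The weight on the left satisfies $\int w\,\dd\mu\lesssim r^{2(q-2)}r^d$. The ratio is therefore bounded below by a constant, which gives $\langle\nabla^2\varphi_\eps h,h\rangle\ge 1/C$ a.e. on $K_0$. On any convex subset of $K_0$, integrating along segments then yields $C$-strong convexity. Note that the bound $\xi\gtrsim r^2$ is not needed for the upper bound on $\int w$ when $q\ge2$.

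\textbf{Main obstacle.} The hard part is the two-sided control $\xi\approx r^2$ in Step 2, together with the $q<2$ case, where negative powers of $\xi$ blow up near $\partial\mathcal S_x$. If that proves delicate, I would first try an alternative route. The idea is to compute $\nabla^2\varphi_\eps(x)$ as the inverse of $\nabla^2\psi_\eps$ evaluated near $\nabla\varphi_\eps(x)$, up to errors of order $r^\alpha$, using the symmetric Hessian bound for $\psi_\eps$ from \cite{GvalaniKoch2026}.
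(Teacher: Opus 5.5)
\textbf{Verdict: the proposal is correct for $p\in(1,2)$ but leaves the case $p=2$, which the statement includes, unproved.}

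\textbf{The case $q>2$.} For $p\in(1,2)$, i.e.\ $q>2$, your Steps 1--3 are correct and follow the paper's proof. The Hessian is the $(q-2)\xi^{q-3}$-weighted second moment of $y-\nabla\varphi_\eps(x)$ over $\mathcal S_x$, normalized by $\int\xi^{q-2}\,\dd\mu\lesssim r^{2(q-2)+d}$, and the numerator is $\gtrsim r^{2(q-3)+2+d}$. Your two-sided bound $\xi\approx r^2$ on a ball of radius $\approx r$ merges the paper's two subcases $q\ge 3$ and $2<q<3$. It is valid: concavity of $\xi(x,\cdot)$ on $\mathcal S_x$ gives $\xi(x,y)\ge \max\xi(x,\cdot)\,\mathrm{dist}(y,\partial\mathcal S_x)/\mathrm{diam}(\mathcal S_x)$.

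There is one slip. The identity to differentiate is $\int\xi_+^{q-2}(y-\nabla\varphi_\eps(x))\,\dd\mu(y)=0$, not the version with exponent $q-1$. That version would say $\nabla\varphi_\eps=T_\eps$, which is false in general. Your displayed Hessian formula is nonetheless the correct one.

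\textbf{The gap at $p=2$.} You only defer this case. There your Step 1 identity is vacuous: the factor $q-2$ vanishes while $\xi^{-1}$ is not integrable near $\partial\mathcal S_x$, where $\xi$ vanishes linearly. So the Hessian is carried entirely by the moving boundary, and Steps 2--3 give nothing. What is needed is the boundary formula \eqref{eq:Form2ndDerivative}, which for small $\eps$ (so that $\mathcal S_x\Subset\Omega_1$) reads
\[
\langle\nabla^2\varphi_\eps(x)h,h\rangle=\frac{1}{\mu(\mathcal S_x)}\int_{\partial\mathcal S_x}\frac{\langle y-\nabla\varphi_\eps(x),h\rangle^2}{\|x-\nabla\psi_\eps(y)\|}\,\mu(y)\,\dd\mathcal H^{d-1}(y).
\]
You then need three estimates:
\begin{enumerate}
\item $\|x-\nabla\psi_\eps(y)\|=\|\nabla_y\xi(x,y)\|\lesssim r$ on $\partial\mathcal S_x$, which you already have.
\item $\mu(\mathcal S_x)\lesssim r^d$.
\item A lower bound $\gtrsim r^{d+1}$ on the boundary integral. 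This is the missing ingredient.
\end{enumerate}

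For the third estimate, restrict to the cone $\nabla\varphi_\eps(x)+\mathcal C_h$ with $\mathcal C_h=\{z:\langle z,h\rangle\ge\|z\|/2\}$. There $\langle y-\nabla\varphi_\eps(x),h\rangle^2\ge\frac14\|y-\nabla\varphi_\eps(x)\|^2\ge r^2/(4R_0^2)$ by the inner ball of \Cref{thm:sparsity}. Here, unlike in your Step 3, the ball must be centered at $\nabla\varphi_\eps(x)$.

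Next, the portion of $\partial\mathcal S_x$ inside this cone has $\mathcal H^{d-1}$-measure $\gtrsim r^{d-1}$. The reason is that $\mathcal S_x$ is convex and contains $\mathbb B(\nabla\varphi_\eps(x),r/R_0)$. The nearest-point projection onto $\overline{\mathbb B(\nabla\varphi_\eps(x),r/R_0)}$ is $1$-Lipschitz, and it maps this portion of $\partial\mathcal S_x$ onto the corresponding spherical cap.

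Together these give $\langle\nabla^2\varphi_\eps h,h\rangle\gtrsim r^{2}r^{d-1}/(r\cdot r^d)=1$.

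\textbf{Your fallback does not work.} Comparing $\nabla^2\varphi_\eps$ with $(\nabla^2\psi_\eps)^{-1}$ up to $O(r^\alpha)$ errors is not supported by the available estimates. The cited results give only $L^\infty$ bounds on the Hessians. The $O(r)$ closeness of $\nabla\varphi_\eps(x)$ to $(\nabla\psi_\eps)^{-1}(x)$ is a $C^0$ statement and does not control second derivatives.
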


\Cref{thm:sparsity} describes completely how the support contracts as $\varepsilon$ decreases. From here it is easy to derive that the Lebesgue measure  $|\mathcal{S}_x|$ of $\mathcal{S}_x$ is of order $\eps^{\frac{d}{d+2}}$. 
We now focus on analyzing how it shrinks toward the support of the transport plan $\pi_0$. 
Recall that the support of $\pi_0$ is contained in the graph of the gradient map $\nabla \varphi$.  We follow  a strategy very similar to that of~\cite[Chapter~6]{GonzalezSanzNutz2024.Scalar}. 
More precisely, we first control the projection onto the barycenter using the arguments of~\cite[Section~4.2]{CarlierPegonTamanini.22}. 
We then exploit the sparsity of the support.
Define 
$$ T_{\e}(x) =\frac{\int y (\xi(x,y))_+^{q-1} \dd\mu(y)}{ \int (\xi(x,y))_+^{q-1} \dd\mu(y)} = \int y \pi_{\eps}(y|x),$$
where $\pi_\eps(y|x)$ denotes the conditional measure of the ROT plan of $y$ given $x$. The same argument employed by \cite{CarlierPegonTamanini.22} or \cite{GonzalezSanzNutz2024.Scalar} yields 
\begin{equation}
    \label{eq:a-la-Carlier}
     \|T_{\e}- \nabla \varphi\|_{L^2(\Omega_0)}^2 \lesssim   \int \|x-y\|^2 d(\pi_\eps-\pi_0)(x,y) \leq {\rm ROT}_\eps-{\rm OT} \lesssim \e^\frac 2 {d(p-1)+2},
\end{equation}
where the last inequality follows from \cite{Eckstein.Nutz.2023}. 
Let $ {\rm d}(\mathcal{S}_{(\cdot)}, \nabla \varphi)  $ denote the mapping $x\mapsto \sup_{y\in \mathcal{S}_{x}} \| y- \nabla \varphi(x)\|  $. 
Since $\mathcal{S}_x$ is convex and  $T_{\e}(x)$ is an average on $\mathcal{S}_x$,  $T_{\e}(x)\in \mathcal{S}_x$. 
Hence, for $K_0\Subset \Omega_0$,  we get  
$$  \| {\rm d}(\mathcal{S}_{(\cdot)}, \nabla \varphi) \|_{L^2(K_0)} \leq \| T_\eps- \nabla \varphi \|_{L^2(K_0)}+\sup_{x\in K_0}{\rm diam}(\mathcal{S}_x) \lesssim  \e^\frac 1 {d(p-1)+2} ,  $$
where we used  \Cref{thm:sparsity} and \eqref{eq:a-la-Carlier}. 
Since  
$  \nabla \varphi_\eps(x)= \frac{\int_{\mathcal S_x} y (\xi(x,y))_+^{q-2} \dd \mu(y)}{ \int_{\mathcal S_x} (\xi(x,y))_+^{q-2} \dd \mu(y)} \in \mathcal{S}_x, $ we have shown the following result.  
\begin{corollary}[Rates of ROT map]\label[corollary]{Coro:Rates-map}
  Fix $K_0\Subset \Omega_0$. Then there exists a constant $C>0$ and $\e_0>0$ such that, for every $\eps\in (0,\e_0]$,  
$$ \|\nabla \varphi_\eps- \nabla \varphi\|_{L^2(K_0)}   \leq \|  {\rm d}(\mathcal{S}_{(\cdot)}, \nabla \varphi) \|_{L^2(K_0)} \leq C \e^\frac 1 {d(p-1)+2}.   $$ 
If $\Omega_1$ is convex, the result holds for all $\e\in (0,1]$.
\end{corollary}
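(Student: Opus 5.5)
The argument is essentially the one sketched just before the statement, and I would write it in three steps: bound the barycentric map, use that both barycenters lie in the section, and then invoke \Cref{thm:sparsity} for the width of the section.

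\textbf{Step 1: the barycentric bound \eqref{eq:a-la-Carlier}.} Following \cite[Section~4.2]{CarlierPegonTamanini.22}, write $\varphi^*$ for the Legendre transform of the Brenier potential, so that $\varphi(x)+\varphi^*(y)-\langle x,y\rangle\ge 0$ with equality on the graph of $\nabla\varphi$. Because $\nabla\varphi$ is bi-$C^{1,\alpha}$, $\nabla^2\varphi^*$ is bounded, and hence
\[
\varphi(x)+\varphi^*(y)-\langle x,y\rangle\gtrsim \|y-\nabla\varphi(x)\|^2 .
\]
Integrate this against $\pi_\eps$ and use that $\pi_\eps$ and $\pi_0$ share marginals. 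This gives
\[
\int\|y-\nabla\varphi(x)\|^2\,\dd\pi_\eps \lesssim \int \tfrac12\|x-y\|^2\,\dd(\pi_\eps-\pi_0).
\]
Since $h_p\ge h_p(1)$-type bounds make the entropy term of $\pi_\eps$ nonnegative up to sign, the right-hand side is at most ${\rm ROT}_\eps-{\rm OT}$. By \cite{Eckstein.Nutz.2023} this is $\lesssim\e^{2/(d(p-1)+2)}$. Applying Jensen to the conditional measures $\pi_\eps(\cdot|x)$ then bounds $\|T_\eps-\nabla\varphi\|_{L^2(\Omega_0)}^2$ by the left-hand side.

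\textbf{Step 2: both barycenters lie in $\mathcal S_x$.} The set $\mathcal S_x=\{y:\xi(x,y)>0\}$ is the superlevel set of the concave function $y\mapsto\langle x,y\rangle-\psi_\eps(y)$, so it is convex. The maps $T_\eps(x)$ and $\nabla\varphi_\eps(x)$, the latter by \eqref{eq:derivative}, are averages of $y$ against probability measures carried by $\mathcal S_x$. They therefore lie in the closed convex hull $\overline{\mathcal S_x}$, which is enough for distance estimates. Consequently, for $x\in K_0$,
\[
\|\nabla\varphi_\eps(x)-\nabla\varphi(x)\|\le \sup_{y\in\mathcal S_x}\|y-\nabla\varphi(x)\|={\rm d}(\mathcal S_x,\nabla\varphi)(x).
\]
Integrating over $K_0$ gives the first inequality of the corollary.

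\textbf{Step 3: the second inequality.} The triangle inequality gives
\[
{\rm d}(\mathcal S_x,\nabla\varphi)(x)\le \|T_\eps(x)-\nabla\varphi(x)\|+{\rm diam}(\mathcal S_x).
\]
Take $L^2(K_0)$ norms. The first term is handled by Step 1. For the second, enlarge $K_0$ to a smooth domain $K_0'$ with $K_0\subset K_0'\Subset\Omega_0$, which is possible since $K_0$ is compactly contained. \Cref{thm:sparsity} then gives ${\rm diam}(\mathcal S_x)\le 2R_0\e^{1/(d(p-1)+2)}$ uniformly on $K_0$ for $\e\le\e_0$, and $|K_0|<\infty$. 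When $\Omega_1$ is convex, \Cref{thm:sparsity} holds for all $\e\in(0,1]$, and the Eckstein--Nutz bound holds for $\e\in(0,1]$ as well, so the corollary extends to that range.

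\textbf{Main obstacle.} I expect the only delicate point to be the quadratic-detachment inequality in Step 1. It requires global control of $\nabla^2\varphi^*$ on $\Omega_1$, that is, $\nabla\varphi$ bi-Lipschitz, which the standing assumptions supply. The sign of the entropy term also needs to be checked: by Jensen, $\int h_p(\rho_\eps)\,\dd(\lambda\otimes\mu)\ge h_p(1)=0$, so it is nonnegative.
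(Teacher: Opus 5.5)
Your proposal is correct and is essentially the paper's argument. The paper combines the Carlier--Pegon--Tamanini barycentric bound with the Eckstein--Nutz rate, uses that $T_\eps(x)$ and $\nabla\varphi_\eps(x)$ are averages over $\mathcal S_x$, and then applies the triangle inequality with the diameter bound from \Cref{thm:sparsity}, exactly as you do. One small point: the detachment $\varphi(x)+\varphi^*(y)-\langle x,y\rangle\gtrsim\|y-\nabla\varphi(x)\|^2$ comes from the upper bound on $\nabla^2\varphi$ (via \Cref{lem:convex} applied to $\varphi$), not from a bound on $\nabla^2\varphi^*$, but the bi-$C^{1,\alpha}$ assumption supplies both.
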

\section{Preliminaries}\label{Sect:preliminar}
\subsection{Regularized optimal transport}
Recall that $\lambda$ and $\mu$ are probability measures with compact supports $\Omega_0$ and $\Omega_1$, respectively and $\mathcal{C}^{0,\alpha}$ densities bounded away from zero and infinity. We recall our convention $(t)_+^0={\bf 1}_{t\geq 0}$ and the following result proved in \cite[Proposition~2.3]{GonzalezSanzEcksteinNutz.25}. 
\begin{proposition}\label[proposition]{propo:ROTprelims}
    Let $\lambda,\mu$ satisfy our assumptions.
    \begin{enumerate}
    \item
    The strong duality 
      ${\rm ROT}_{\eps,p}={\rm ROT}_{\eps,p}^*$
    holds. 
    \item
    The primal problem~\eqref{ROT} admits a unique optimizer $\pi_\eps\in \Pi( \lambda,\mu)$.
    \item
    The dual problem~\eqref{Dual} admits a (non-unique) optimizer $(f_\eps,g_\eps)\in L^\infty(\lambda)\times L^\infty(\mu)$.
    \item
    A pair $(f_\eps,g_\eps)\in L^\infty(\lambda)\times L^\infty(\mu)$ is an optimizer of the dual problem~\eqref{Dual} if and only if there exists  a version of $(f_\eps,g_\eps)$ such that
    \begin{equation}
        \label{eq:Srodinger-system}
        \begin{cases}
             \int \left( f_{\varepsilon}(x) + g_{\varepsilon}(y) - \frac{1}{2}\|x-y\|^2\right)_{+}^{q-1} \dd\mu(y)=\eps^{q-1}q^{q-1}\quad \text{for all }x\in\Omega_0,\\
             \int \left( f_{\varepsilon}(x) + g_{\varepsilon}(y) - \frac{1}{2}\|x-y\|^2\right)_{+}^{q-1} \dd\lambda(x)=\eps^{q-1}q^{q-1}\quad \text{for all }y\in\Omega_1.
        \end{cases}
    \end{equation}
    For such a version $(f_\eps,g_\eps)$, there exists a constant $C$ depending on $
    \Omega_i$ and $p$ such that  $\|f_\eps \oplus g_\eps \|_\infty \leq C$ holds.
    Moreover, $f_\eps$ and $g_\eps$ are uniformly Lipschitz in $\Omega_0$ and $\Omega_1$. 
    \item Let $(f_\eps,g_\eps)$ solve \eqref{eq:Srodinger-system} and abbreviate $\xi(x,y):=f_{\varepsilon}(x) + g_{\varepsilon}(y) - \frac{1}{2}\|x-y\|^2$. 
    Then there exists $\delta=\delta(\eps) > 0$ such that
     \begin{align*}
        \int \left(\xi(\cdot,y)\right)_{+}^{q-2}  \dd \mu(y)\geq \delta \quad\mbox{on }\Omega_0, \qquad
        \int \left(\xi(x,\cdot)\right)_{+}^{q-2}  \dd \lambda(x)\geq \delta\quad\mbox{on }\Omega_1.
     \end{align*}
\end{enumerate}
\end{proposition}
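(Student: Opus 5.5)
The plan is to treat the statement as a convex-duality result and prove items (1)–(4) together by constructing a dual optimizer that satisfies \eqref{eq:Srodinger-system}. The primal candidate built from it then closes the duality gap, and item (5) follows from the bounds in (4).

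\textbf{Step 1 (pointwise Fenchel–Young).} Write $c(x,y)=\frac12\|x-y\|^2$. For every $s\ge 0$ and every real $t$,
$$\e h_p(s)\ \ge\ ts-\frac{1}{\e^{q-1}q^q}(t)_+^q-\frac{\e}{p-1},$$
with equality iff $s=\frac{(t)_+^{q-1}}{\e^{q-1}q^{q-1}}$. This is the Legendre duality between $|z|^p/(p-1)$ and $(\cdot)_+^q$, up to the scaling by $\e$.
\begin{itemize}
\item \emph{Weak duality.} Integrate this inequality with $t=a(x)+b(y)-c(x,y)$ and $s=\dd\pi/\dd(\lambda\otimes\mu)$. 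Since $\pi$ has marginals $\lambda,\mu$, this gives ${\rm ROT}_{\e,p}\ge {\rm ROT}^*_{\e,p}$, provided $h_p$ is normalized consistently; the constant $\e/(p-1)$ is absorbed by the definition of $h_p$.
\item \emph{Uniqueness in (2).} This follows from strict convexity of $h_p$ on $[0,\infty)$ and convexity of $\Pi(\lambda,\mu)$.
\item \emph{Existence in (2) and (1).} Existence of a primal minimizer follows from weak compactness in $L^p(\lambda\otimes\mu)$ and lower semicontinuity. It will in any case come for free in Step 3.
\end{itemize}

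\textbf{Step 2 (dual existence, \eqref{eq:Srodinger-system}, bounds).} The dual functional $J(a,b)$ is concave and invariant under $(a,b)\mapsto(a+k,b-k)$.
\begin{itemize}
\item \emph{First-order conditions.} Differentiating $J$ in the direction $(\eta,0)$ gives $\int \eta(x)\big[1-\frac{1}{(\e q)^{q-1}}\int (a+b-c)_+^{q-1}\dd\mu\big]\dd\lambda$. Critical points are therefore exactly the solutions of \eqref{eq:Srodinger-system} (a.e., then for a suitable version). By concavity, critical points are maximizers, which gives the ``iff'' in (4).
\item \emph{Improvement operator.} For fixed $b$, define $a=\mathcal{F}(b)$ by solving $\int (a(x)+b(y)-c(x,y))_+^{q-1}\dd\mu(y)=(\e q)^{q-1}$ for each $x$. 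The left side is continuous in $a(x)$, nondecreasing, and strictly increasing once positive, so $a(x)$ is well defined. Replacing $a$ by $\mathcal{F}(b)$ does not decrease $J$, and the same holds for $b\mapsto \mathcal{F}'(a)$.
\item \emph{Lipschitz bound.} $\mathcal F(b)$ is Lipschitz with the constant $L$ of $c$ on $\Omega_0\times\Omega_1$. Indeed, if $a(x')<a(x)-L\|x-x'\|$, then the integrand at $x'$ is pointwise strictly below the one at $x$ wherever the latter is positive, contradicting that both integrals equal $(\e q)^{q-1}$.
\item \emph{Sup-norm bound.} The same comparison bounds ${\rm osc}\,a\le L\,{\rm diam}\,\Omega_0$. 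The equation forces $\sup_y(a(x)+b(y)-c)$ to be bounded above by a constant (the integral would otherwise exceed $(\e q)^{q-1}$, using that $\mu$ has density bounded below) and below by $0$. Together these give $\|a\oplus b\|_\infty\le C$.
\item \emph{Conclusion.} Apply the improvement operator to a maximizing sequence, normalized by the $(k,-k)$ invariance. The sequence is then uniformly bounded and equi-Lipschitz, so Arzelà–Ascoli and upper semicontinuity of $J$ give a maximizer. This proves (3) and the bounds in (4).
\end{itemize}

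\textbf{Step 3 (strong duality).} Given a solution $(f_\e,g_\e)$ of \eqref{eq:Srodinger-system}, define $\pi_\e$ through \eqref{eq:density-ROT}. The system says precisely that $\pi_\e\in\Pi(\lambda,\mu)$. Moreover, equality holds pointwise in Step 1, so the primal value at $\pi_\e$ equals the dual value at $(f_\e,g_\e)$. This yields (1), and also existence in (2).

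\textbf{Step 4 (item (5)).} Since $p\le 2$ we have $q\ge 2$, so $q-2\ge 0$; the convention $(t)_+^0={\bf 1}_{t\ge0}$ covers $q=2$. Using $\|\xi\|_\infty\le C$ from (4),
$$(\e q)^{q-1}=\int \xi(x,y)_+^{q-1}\dd\mu(y)\le C\int \xi(x,y)_+^{q-2}\dd\mu(y).$$
Hence (5) holds with $\delta=(\e q)^{q-1}/C$, and symmetrically in $y$.

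\textbf{Main obstacle.} I expect the main obstacle to be Step 2: making the improvement operator rigorous and extracting uniform $L^\infty$ and Lipschitz bounds independent of the maximizing sequence. The key point is the monotone comparison argument, which relies on the densities being bounded below so that the equation pins down $\sup_y \xi$.
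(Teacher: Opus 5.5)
The paper gives no proof of this proposition; it quotes it from \cite[Proposition~2.3]{GonzalezSanzEcksteinNutz.25}. Your proposal is a self-contained proof, and it is correct in substance:
\begin{itemize}
\item Pointwise Fenchel--Young gives weak duality. Its equality case shows that the plan \eqref{eq:density-ROT}, built from any everywhere-solution of \eqref{eq:Srodinger-system}, closes the duality gap.
\item Concavity and Gateaux differentiability of the dual functional identify maximizers with a.e.\ solutions.
\item The alternating improvement operator makes maximizing sequences equi-Lipschitz and bounded, with the Lipschitz constant of $c$ on $\Omega_0\times\Omega_1$ coming from the comparison argument. Arzel\`a--Ascoli then produces a maximizer.
\end{itemize}
Compared with the citation, your route buys three things. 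It shows that only compactness of the supports (and smoothness of $c$) is used, with no density bounds or H\"older regularity. It gives explicit constants, such as $\delta=(\e q)^{q-1}/\|\xi_+\|_\infty$. And it explains why the bounds in (4) hold for \emph{every} everywhere-version: by uniqueness of the root, any such version is a fixed point $f_\e=\mathcal F(g_\e)$, $g_\e=\mathcal F'(f_\e)$.

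A few points should be tightened.

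\emph{The additive constant.} Your own Step 1 shows that, with $h_p(z)=(|z|^p-1)/(p-1)$ and \eqref{Dual} written without a constant, one gets ${\rm ROT}_{\e,p}={\rm ROT}^*_{\e,p}-\frac{\e}{p-1}$. The constant is not absorbed by $h_p$; it is produced by the $-1$ in $h_p$. So item (1) holds literally only after dropping that $-1$ or subtracting $\e/(p-1)$ in the dual. Optimizers are unaffected, but you should say this explicitly rather than waving it away.

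\emph{The sup-norm bound.} This step needs no lower bound on the density of $\mu$. Once $g$ is $L$-Lipschitz, $\xi(x,\cdot)$ is $2L$-Lipschitz on $\Omega_1$. Since $\mu(\Omega_1)=1$, this gives $(\e q)^{q-1}\ge(\max_y\xi(x,y)-2L\,{\rm diam}\,\Omega_1)_+^{q-1}$. An argument on small balls would instead need a measure-density condition on $\Omega_1$, which is not assumed. In either case the bound is uniform only for $\e$ in bounded sets such as $(0,1]$.

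\emph{The maximizing sequence.} After one round $(\mathcal F(b_n),\mathcal F'(\mathcal F(b_n)))$, only the $y$-equation holds exactly. Run the sup bound from that equation, not the $x$-equation.

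\emph{Versions in (4).} Spell out how an a.e.\ solution is upgraded: re-solve on the exceptional $\lambda$-null set, then on the exceptional $\mu$-null set. Neither modification affects the other equation.
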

Now we show higher order of differentiability. It is convenient to work instead with 
$$ \varphi_\e=\frac{1}{2}\|\cdot\|^2-f_\eps \quad {\rm and}\quad \psi_\e = \frac{1}{2}\|\cdot\|^2-g_\eps, $$
which are convex functions on convex subsets of $\Omega_0$ and $\Omega_1$, respectively, as the following proposition shows. 
\begin{proposition}\label{Proposition-convex-and-first-derivatives}
It follows that $\varphi_\e$ is convex and $\mathcal{C}^{1}$ with gradient
\begin{equation}
    \label{eq:gradient-propo}
     \nabla \varphi_\e(x) = \frac{\int_{\mathcal{S}_x} y (\xi(x,y))^{q-2}\;\dd\mu(y)}{\int_{\mathcal{S}_x}  (\xi(x,y))^{q-2}\;\dd\mu(y)}
\end{equation}
on any convex subset of $\Omega_0$. 
As a consequence, for all $x\in \Omega_0$, if $\mathcal S_x\cap\partial \Omega_1=\emptyset$ or $\Omega_1$ is convex, $\mathcal{S}_x$ is a convex set.
\end{proposition}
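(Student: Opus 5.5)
The statement to prove is \Cref{Proposition-convex-and-first-derivatives}. The plan is to read convexity off the dual system \eqref{eq:Srodinger-system} via a $c$-transform-type representation, and to get the gradient formula by implicit differentiation.

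\textbf{Step 1 (convexity).} Rewrite $\xi(x,y)=\langle x,y\rangle-\varphi_\e(x)-\psi_\e(y)$. For fixed $x$, define $F_x(t):=\int (\langle x,y\rangle-t-\psi_\e(y))_+^{q-1}\dd\mu(y)$. Since $q-1>0$, this function is nonincreasing in $t$. It is strictly decreasing on the range where it is positive, and it is continuous. The first equation of \eqref{eq:Srodinger-system} therefore says that $\varphi_\e(x)$ is the unique $t$ with $F_x(t)=\e^{q-1}q^{q-1}$. Now take $x_0,x_1$ in a convex subset of $\Omega_0$, set $x_s=(1-s)x_0+sx_1$, and put $t_s=(1-s)\varphi_\e(x_0)+s\varphi_\e(x_1)$. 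The map $(x,t)\mapsto(\langle x,y\rangle-t-\psi_\e(y))_+^{q-1}$ is convex, since it is an affine function composed with $u\mapsto u_+^{q-1}$, and that composition is convex because $q\ge 2$ for $p\in(1,2]$. Convexity gives $F_{x_s}(t_s)\le(1-s)F_{x_0}(\varphi_\e(x_0))+sF_{x_1}(\varphi_\e(x_1))=\e^{q-1}q^{q-1}=F_{x_s}(\varphi_\e(x_s))$. Monotonicity of $F_{x_s}$ then yields $\varphi_\e(x_s)\le t_s$, which is convexity. One caveat: the first equation is only required at points of $\Omega_0$, so we need $x_s\in\Omega_0$; this is why the statement is restricted to convex subsets.

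\textbf{Step 2 ($\mathcal C^1$ and the gradient).} Apply the implicit function theorem to $G(x,t)=F_x(t)-\e^{q-1}q^{q-1}$. Differentiating under the integral, which is justified by dominated convergence because $u\mapsto u_+^{q-1}$ is $\mathcal C^1$ for $q>1$, gives
\[
\partial_t G=-(q-1)\int \xi_+^{q-2}\dd\mu, \qquad \nabla_x G=(q-1)\int y\,\xi_+^{q-2}\dd\mu .
\]
Both derivatives depend continuously on $(x,t)$; this uses that $\psi_\e$ is bounded and that $\mu$ has a density, so the level sets $\{\xi=0\}$ are $\mu$-null, which matters when $q=2$ and the integrand is an indicator. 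By item 5 of \Cref{propo:ROTprelims}, $|\partial_tG|\ge(q-1)\delta>0$. The implicit function theorem then shows that $\varphi_\e$ is $\mathcal C^1$, with $\nabla\varphi_\e=-\nabla_xG/\partial_tG$, which is \eqref{eq:gradient-propo}. Near boundary points one can use a one-sided version of the theorem, or extend $G$ beyond $\Omega_0$ and check that it agrees with $\varphi_\e$ on the set where the equation holds.

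\textbf{Step 3 (convexity of $\mathcal S_x$).} By the symmetric argument, $\psi_\e$ is convex on convex subsets of $\Omega_1$, so $y\mapsto\xi(x,y)=\langle x,y\rangle-\varphi_\e(x)-\psi_\e(y)$ is concave there. If $\Omega_1$ is convex, $\mathcal S_x$ is a superlevel set $\{\xi(x,\cdot)>0\}$ of a concave function on a convex set, hence convex. If instead $\mathcal S_x\cap\partial\Omega_1=\emptyset$, concavity only holds locally, and I would argue as follows.
\begin{itemize}
\item Extend $\psi_\e$ outside $\Omega_1$ through the formula obtained from the second equation of \eqref{eq:Srodinger-system}. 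By Step 1 with the roles of $x$ and $y$ exchanged, the extension is convex on all of $\mathbb R^d$.
\item The set $\{y\in\mathbb R^d:\xi(x,y)>0\}$ is then convex.
\item Its intersection with $\Omega_1$ is $\mathcal S_x$, which avoids $\partial\Omega_1$. A convex, hence connected, set that meets both $\mathrm{int}\,\Omega_1$ and its complement must cross $\partial\Omega_1$. So the whole superlevel set lies in $\mathrm{int}\,\Omega_1$ and equals $\mathcal S_x$, which is therefore convex.
\end{itemize}

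\textbf{Main obstacle.} I expect the main obstacle to be Step 2 in the case $q=2$, where the integrand $\xi_+^{0}={\bf 1}_{\xi\ge0}$ is discontinuous. Continuity of the partial derivatives then depends on the sets $\{\xi(x,\cdot)=0\}$ having $\mu$-measure zero, which I would derive from the Lipschitz regularity of $g_\e$ together with the concavity of $\xi$ from Step 3. Care is needed to avoid a circular argument here: Step 3 relies on Step 1, but not on Step 2, so the order of proof is consistent.
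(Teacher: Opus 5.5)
Your Step 1 is the paper's argument: joint convexity of $(x,t)\mapsto(\langle x,y\rangle-t-\psi_\e(y))_+^{q-1}$ combined with strict monotonicity in $t$. After that you take a different route. For the $\mathcal C^1$ claim, the paper uses that $\varphi_\e$ is Lipschitz and differentiates the Schr\"odinger system at the points of differentiability given by Rademacher's theorem. It then obtains continuity of the resulting formula from H\"older continuity of $\xi_+^{q-2}$ when $q>2$, and for $q=2$ it simply cites \cite{GonzalezSanzDelBarrioNutz.25}. For the convexity of $\mathcal S_x$, the paper gives no argument beyond ``as a consequence''.

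Your implicit-function argument gives the gradient formula at every point, handles boundary points of $\Omega_0$ directly, and is self-contained. Your Step 3 is a real proof of the final claim, which the paper omits. You extend $\psi_\e$ to all of $\R^d$ through the second equation; this extension is convex by the Step 1 argument, because the equation now holds everywhere. You then use connectedness of the convex superlevel set. As a by-product, the extension removes the need to restrict to convex subsets. The price is that for $q=2$ you must prove yourself what the paper outsources.

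That proof is where your write-up needs repair. In Step 2 you say that $\mu$ having a density makes the sets $\{y:\langle x,y\rangle-t-\psi_\e(y)=0\}$ $\mu$-null. This does not follow: level sets of a bounded, even Lipschitz, function can have positive Lebesgue measure. Lipschitz regularity of $g_\e$ does not help either.

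What works is the concavity you mention in your last paragraph, used as follows. Since $\int\xi(x_0,\cdot)_+^{q-1}\,\dd\mu>0$, for all $(x,t)$ in a neighbourhood of $(x_0,\varphi_\e(x_0))$ the concave function $y\mapsto\langle x,y\rangle-t-\psi^{\rm ext}_\e(y)$ has strictly positive supremum. A concave function with positive supremum satisfies, along the ray from a point $y_0$ with positive value through any zero $y$, strict negativity beyond $y$. Hence its zero set lies on the boundary of the convex set where it is nonnegative, and is therefore Lebesgue-null. You need this for every $(x,t)$ in that neighbourhood, not only on the graph of $\varphi_\e$. Only then are the partial derivatives of $G$ defined and continuous there, so that $G\in\mathcal C^1$ and the implicit function theorem applies. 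With this filled in, the proposal is complete.
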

\begin{proof}
    To show that $\varphi_\e$ is convex on convex subsets of $\Omega_0$, we follow the same strategy as in the proof of  \cite[Lemma~2.4]{WieselXu.24} for $p=2$. Fix $t\in (0,1)$ and $x\neq z$. Then the convexity of $(\cdot)_+^{q-1}$ gives
    \begin{align*}
       &\int  \left( \langle (1-t) x+t z,y \rangle -(1-t)\varphi_{\varepsilon}(x) -t\varphi_{\varepsilon}(z)- \psi_{\varepsilon}(y) \right)_{+}^{q-1} \dd\mu(y)\\
       &= \int  \left((1-t) ( \langle x,y \rangle -\varphi_{\varepsilon}(x) -\psi_{\varepsilon}(y))+t( \langle z,y \rangle -\varphi_{\varepsilon}(z) -\psi_{\varepsilon}(y)) \right)_{+}^{q-1} \dd\mu(y)\\
       &\leq (1-t) \int  \left( \langle x,y \rangle -\varphi_{\varepsilon}(x) -\psi_{\varepsilon}(y)\right)_{+}^{q-1} \dd\mu(y)+ t\int  \left( \langle z,y \rangle -\varphi_{\varepsilon}(z) -\psi_{\varepsilon}(y)\right)_{+}^{q-1} \dd\mu(y)\\
       &=  \eps^{q-1}q^{q-1}=  \int  \left( \langle (1-t) x+t z,y \rangle -\varphi_{\varepsilon}((1-t) x+t z) -\psi_{\varepsilon}(y) \right)_{+}^{q-1} \dd\mu(y),
    \end{align*}
    where we used also \eqref{eq:Srodinger-system}. 
    The  monotonicity of 
    $$ s\mapsto \int  \left( \langle (1-t) x+t z,y \rangle +s -\psi_{\varepsilon}(y) \right)_{+}^{q-1} \dd\mu(y),  $$
    which is strict for $s$ in a neighborhood of $\varphi_{\varepsilon}((1-t) x+t z)$,  implies that 
    $$ \varphi_{\varepsilon}((1-t) x+t z)\leq (1-t)\varphi_{\varepsilon}(x) +t\varphi_{\varepsilon}(z)   .$$
    This shows that $\varphi_{\varepsilon}$ is convex on convex subsets of $\Omega_0$. 

    Since $\varphi_{\varepsilon}$ is Lipschitz (by \Cref{propo:ROTprelims}), it is differentiable a.e.~with bounded derivative by Rademacher's theorem. Hence, \eqref{eq:gradient-propo} follows by  differentiating  both sides of \eqref{eq:Srodinger-system}.  It only remains to show that $    \nabla \varphi_\e$ is continuous. For $q=2$ this is shown in \cite{GonzalezSanzDelBarrioNutz.25} and for $q>2$ the result is straightforward as $(\xi)^{q-2}$ is $\min((q-2),1) $-H\"older continuous. 
\end{proof}
As a consequence of Reynolds' transport theorem, see \Cref{appendix}, we record the following information on second derivatives of $\varphi_\e$.
\begin{proposition} Assume $\Omega_1$ is a Lipschitz domain. Then $\varphi_\e\in W^{2,\infty}_{{\rm loc}}(\Omega_0)$ with a.e.~defined second derivative
   \begin{align}\label{eq:Form2ndDerivative}
      \langle \nabla^2 \varphi_\e(x) u,v\rangle = \begin{cases}(q-2)\frac{\int_{\mathcal S_x} \langle  \nabla \varphi_\e(x) -y, u \rangle \langle \nabla \varphi_\e(x) -y, v \rangle \xi(x,y)^{q-3}\;\mu\, \dd\mu(y)}{\int_{\mathcal S_x} \xi(x,y)^{q-2}\;\dd\mu(y)} & \text{ if } p\neq 2,\vspace{2mm}\\
\frac{\int_{\partial \mathcal S_x\setminus \partial\Omega_1} \frac{\langle  \nabla \varphi_\e(x) -y, u \rangle \langle\nabla \varphi_\e(x) -y, v \rangle }{\|\nabla \psi_\e(y)-x\|} \mu\, \dd\mathcal H^{d-1}(y)}{\mu(\mathcal S_x)}, & \text{ if } p=2.
\end{cases}
\end{align}
For any $K\Subset\Omega_1$, on $\{x\in\Omega_0 \colon \mathcal S_x\subset K\}$, the statement holds without any regularity assumption on $\Omega_1$.
\end{proposition}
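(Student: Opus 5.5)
The plan is to differentiate the identity that characterizes $\nabla\varphi_\e$ once more in $x$. Rewriting \eqref{eq:gradient-propo}, for every $x$ in a convex subset of $\Omega_0$ we have
\begin{equation*}
\Phi(x):=\int \big(y-\nabla\varphi_\e(x)\big)\,\big(\xi(x,y)\big)_+^{q-2}\,\dd\mu(y)=0 ,\qquad \xi(x,y)=\langle x,y\rangle-\varphi_\e(x)-\psi_\e(y).
\end{equation*}
The key observation is $\partial_x\xi(x,y)\cdot u=\langle y-\nabla\varphi_\e(x),u\rangle$, using \Cref{Proposition-convex-and-first-derivatives}. So moving $x$ in direction $u$ shifts $\xi(\cdot,y)$ at first order by exactly the weight appearing in \eqref{eq:Form2ndDerivative}. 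Formally differentiating $\Phi(x)=0$ in direction $u$ and pairing with $v$ then gives
\begin{equation*}
\langle \nabla^2\varphi_\e(x)u,v\rangle\int\xi^{q-2}_+\dd\mu=\frac{\dd}{\dd t}\Big|_{t=0}\int\langle y-\nabla\varphi_\e(x),v\rangle\,\xi(x+tu,y)_+^{q-2}\,\dd\mu(y),
\end{equation*}
with $\nabla\varphi_\e(x)$ frozen in the last integrand, which is legitimate because $\int(y-\nabla\varphi_\e(x))\xi^{q-2}_+\dd\mu=0$. The denominator is positive by \Cref{propo:ROTprelims}(5).

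\emph{Case $p\neq 2$ ($q>2$).} Differentiating under the integral yields $(q-2)\int_{\mathcal S_x}\langle y-\nabla\varphi_\e,u\rangle\langle y-\nabla\varphi_\e,v\rangle\xi^{q-3}\dd\mu$, which is the first formula. The only issue is $2<q<3$, where $\xi^{q-3}$ blows up on $\partial\mathcal S_x$. I would control this with a nondegeneracy bound. The map $y\mapsto\xi(x,y)$ is concave on convex pieces and has a positive maximum $m_x$, because $\mathcal S_x$ carries positive mass. Concavity then forces $|\nabla_y\xi|=\|x-\nabla\psi_\e(y)\|\ge m_x/\operatorname{diam}\Omega_1$ on $\{\xi=0\}$. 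Hence $\xi$ vanishes linearly at the boundary, and $\xi^{q-3}$ is integrable, locally uniformly in $x$, since $q-3>-1$. Dominated convergence on difference quotients then justifies the differentiation.

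\emph{Case $p=2$ ($q=2$).} Here the integrand is $\langle y-\nabla\varphi_\e,v\rangle\mathbf 1_{\{\xi(x+tu,\cdot)\ge0\}}\mu$, and only the domain $\mathcal S_{x+tu}$ moves. By Reynolds' transport theorem, its boundary portion $\{\xi=0\}\setminus\partial\Omega_1$ moves with outward normal velocity $\langle y-\nabla\varphi_\e(x),u\rangle/\|\nabla\psi_\e(y)-x\|$. This uses $\partial_t\xi=\langle y-\nabla\varphi_\e,u\rangle$ and $\nabla_y\xi=x-\nabla\psi_\e(y)$, where $\psi_\e\in\mathcal C^1$ by the symmetric version of \Cref{Proposition-convex-and-first-derivatives}. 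The portion on $\partial\Omega_1$ does not move. The Lipschitz regularity of $\Omega_1$ is exactly what is needed to apply Reynolds to the intersection $\{\xi\ge0\}\cap\Omega_1$. Again the lower bound on $\|\nabla\psi_\e(y)-x\|$ from concavity keeps the boundary integrand bounded.

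To make this rigorous, and to obtain $W^{2,\infty}_{\rm loc}$ rather than a merely formal identity, I would run the computation on difference quotients. Writing $\Phi(x+hu)-\Phi(x)=0$ and splitting the increment gives
\begin{equation*}
\frac{\nabla\varphi_\e(x+hu)-\nabla\varphi_\e(x)}{h}\int\xi^{q-2}_+\dd\mu=\frac1h\int (y-\nabla\varphi_\e(x+hu))\big[\xi(x+hu,y)_+^{q-2}-\xi(x,y)_+^{q-2}\big]\dd\mu .
\end{equation*}
The right-hand side is bounded by $C\int|\xi(x+hu)_+^{q-2}-\xi(x)_+^{q-2}|/h$. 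This is $O(1)$ either by the linear vanishing of $\xi$, together with the measure of a boundary layer of width $O(h)$ when $q=2$, or by the integrability of $\xi^{q-3}$ when $q>2$. This gives a local Lipschitz bound on $\nabla\varphi_\e$, with constants allowed to depend on $\e$. Rademacher's theorem then gives a.e.\ second derivatives, and passing $h\to0$ gives \eqref{eq:Form2ndDerivative}.

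For the final claim about $K\Subset\Omega_1$: on $\{x:\mathcal S_x\subset K\}$ the boundary $\partial\Omega_1$ never meets $\partial\mathcal S_x$. Reynolds is then applied only to the level set $\{\xi=0\}$, which is a $\mathcal C^1$ hypersurface by the nondegeneracy bound, so no regularity of $\Omega_1$ is needed.

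The main obstacle is the uniform nondegeneracy of $\nabla_y\xi$ on $\partial\mathcal S_x$, uniformly for $x$ in a compact set. I would obtain it from concavity plus a locally uniform lower bound on $m_x=\max_y\xi(x,y)$, which follows from the constraint $\int\xi_+^{q-1}\dd\mu=\e^{q-1}q^{q-1}$ and the continuity of $\varphi_\e,\psi_\e$.
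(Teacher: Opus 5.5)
Your proposal is correct and is essentially the paper's argument. The paper differentiates the barycentric formula for $\nabla\varphi_\e$ by the quotient rule, which is equivalent to your differentiation of $\Phi=0$ with $\nabla\varphi_\e(x)$ frozen, applying the Reynolds transport theorem of the appendix to numerator and denominator. That theorem rests on exactly your key estimate: concavity of $y\mapsto\xi(x,y)$ together with a positive value of $\xi$ bounds $\|x-\nabla\psi_\e(y)\|$ from below on the free boundary, so $\xi$ vanishes linearly, $\xi^{q-3}$ is integrable, and the normal velocity is bounded.

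The differences are only technical. Where you write ``convex pieces'', the paper decomposes $\mathcal S_x$ into strongly star-shaped domains to get concavity along the relevant segments when $\Omega_1$ is not convex. It also obtains the Sobolev regularity by exhausting $\Omega_1$ with smooth subdomains and passing to the limit distributionally, rather than through your difference-quotient Lipschitz bound and Rademacher's theorem.
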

\begin{proof} For $p=2$,  Reynolds transport theorem (cf.~\Cref{appendix}) allows us to compute for almost every $x\in \Omega_0$,
\begin{align*}
\langle \nabla^2 \varphi_\e(x)u,v\rangle
    &= \frac{ \int_{\partial \mathcal S_x\setminus \partial \Omega_1}  \frac{  \langle y-\nabla \varphi_\e (x), v\rangle \langle y, u\rangle   \mu(y)}{\|x-\nabla \psi_\eps(y)\|}  \dd\mathcal{H}^{d-1}(y) }{\mu(\mathcal{S}_x)}\\
    &\quad - \frac{\langle  \nabla \varphi_\e(x), u\rangle \int_{\partial \mathcal S_x}  \frac{  \langle y-\nabla \varphi_\e (x), v\rangle \mu(y)}{\|x-\nabla \psi_\eps(y)\|}  \dd\mathcal{H}^{d-1}(y) }{\mu(\mathcal{S}_x)}\\
    &= \frac{ \int_{\partial \mathcal S_x\setminus \partial \Omega_1}  \frac{  \langle y-\nabla \varphi_\e (x), v\rangle \langle y-\nabla \varphi_\e (x), u\rangle   \mu(y)}{\|x-\nabla \psi_\eps(y)\|}  \dd\mathcal{H}^{d-1}(y) }{\mu(\mathcal{S}_x)},
\end{align*}
concluding the case $p=2$. Using \Cref{appendix}, the case $p\neq 2$ follows by an analogous calculation. The interior statement holds by the same argument.
\end{proof}
We now recall the following interior regularity result for the ROT potentials.
\begin{theorem}[Interior regularity, \cite{GvalaniKoch2026}]\label{thm:interiorC2}
For any $K_0\times K_1\Subset \Omega_0\times \Omega_1$, there is $L>0$ depending only on $\Omega_0$, $\Omega_1$ such that $(\varphi_\e,\psi_\e)$ are $L$-Lipschitz. Moreover, there exist $C>0$ and $\eps_0>0$ depending only on $d,p,\Omega_0,\Omega_1$, $\|\lambda\|_{C^{0,\alpha}}$, $\|\mu\|_{C^{0,\alpha}}$, $\|T\|_{C^{1,\alpha}}$ and $\|T^{-1}\|_{C^{1,\alpha}}$ such that, for every $\eps\in (0,\eps_0)$,
\begin{align*}
\esssup[K_0] |\nabla^2 \varphi_\e |+\esssup[K_1] |\nabla^2 \psi_\e| \leq C.
\end{align*}
Finally, there exist $C>0$ and $\eps_0$ with the same dependencies such that, for every $\eps\in (0,\eps_0)$,
\begin{align*}
\esssup[x\in K_0,y\in \mathcal S_x] |\nabla^2 \psi_\e(y)|+\esssup[y\in K_1,x\in \mathcal T_y] |\nabla^2 \varphi_\e(x)|\leq C.
\end{align*}
If $\e\geq \e_0$ and $\Omega_1$ is Lipschitz, then there is $C>0$, with the same dependencies, such that $$\esssup[y\in \Omega_1] |\nabla^2 \psi_\e|\leq C.$$
\end{theorem}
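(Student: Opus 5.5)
The statement is quoted from \cite{GvalaniKoch2026}, but I would organize a self-contained proof in three stages: the Lipschitz bound, the Hessian bound for $\e$ of order one, and the uniform Hessian bound as $\e\to0$.

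\textbf{Lipschitz bound.} By \Cref{Proposition-convex-and-first-derivatives}, $\nabla\varphi_\e(x)$ is a weighted average of points $y\in\mathcal S_x\subset\Omega_1$. Hence $\|\nabla\varphi_\e\|\le\sup_{y\in\Omega_1}\|y\|$, and symmetrically $\|\nabla\psi_\e\|\le\sup_{x\in\Omega_0}\|x\|$. This gives $L$ depending only on $\Omega_0,\Omega_1$.

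\textbf{Hessian bound for $\e\ge\e_0$.} I would read it off \eqref{eq:Form2ndDerivative}.
\begin{itemize}
\item For $p\neq2$, the numerator is bounded by $\mathrm{diam}(\Omega_1)^2\int\xi^{q-3}\,\dd\mu$. The denominator is bounded below by $\delta(\e)$ from \Cref{propo:ROTprelims}(5). Since $\xi$ is uniformly Lipschitz, the level sets of $\xi$ have controlled geometry, which controls $\int\xi^{q-3}$ when $q<3$; for $q\ge3$ there is nothing to check. Compactness in $\e\in[\e_0,1]$ makes the constants uniform.
\item For $p=2$, I would use the boundary representation instead. I would bound $\mathcal H^{d-1}(\partial\mathcal S_x)$ using convexity of $\mathcal S_x$ (or the Lipschitz regularity of $\Omega_1$). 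I would bound $\mu(\mathcal S_x)$ from below by $\e$-dependent constants. The factor $1/\|\nabla\psi_\e(y)-x\|$ needs a lower bound on the gradient of $\xi$ on the zero level set, which I would get from the nondegeneracy of the Schr\"odinger system.
\end{itemize}

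\textbf{Uniform bound as $\e\to0$ (the main step).} My plan is a Caffarelli-type perturbation argument combined with rescaling.
\begin{enumerate}
\item Compare with OT. Using the energy bound ${\rm ROT}_\e-{\rm OT}\lesssim\e^{2/(d(p-1)+2)}$ and convexity, show $\|\varphi_\e-\varphi\|_{L^\infty(K_0)}\lesssim\e^{\beta}$ for some $\beta>0$ (after normalizing the additive constant).
\item Approximate Monge--Amp\`ere. Show that $\varphi_\e$ satisfies $\det D^2\varphi_\e\approx\lambda/\mu(\nabla\varphi_\e)$ in the weak sense of mass balance on its sections. This holds at every scale $r\gg\e^{1/(d(p-1)+2)}$, because at those scales the plan $\pi_\e$ transports mass essentially along the graph of $\nabla\varphi_\e$.
\item Iterate down to the critical scale. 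Since the OT map $T=\nabla\varphi$ is $C^{1,\alpha}$, I would run a Campanato/Savin-type iteration comparing $\varphi_\e$ on dyadic sections with the quadratic polynomial approximating $\varphi$. The goal is that the sections of $\varphi_\e$ stay comparable to ellipsoids of bounded eccentricity down to scale $r_\e\approx\e^{1/(d(p-1)+2)}$.
\item Blow up. Rescale $\tilde x=(x-x_0)/r_\e$ with the corresponding affine normalization of $(\varphi_\e,\psi_\e)$. The homogeneity $\|x-y\|^2\sim r^2$ together with the density scaling $r^{d}$ means the choice $r_\e^{d(p-1)+2}=\e$ produces a ROT problem with $\e=1$ between rescaled marginals. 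These marginals still have $C^{0,\alpha}$ densities bounded above and below on large domains, so the order-one Hessian bound from the previous stage gives $|\nabla^2\varphi_\e|\lesssim1$ after scaling back.
\end{enumerate}
The mixed statements over $y\in\mathcal S_x$ with $x\in K_0$ follow by symmetry, once one knows $\mathcal S_x$ lies at distance $O(r_\e)$ from $T(x)\in\mathrm{int}\,\Omega_1$.

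I expect steps 2--3 to be the main obstacle. The difficulty is making the approximate Monge--Amp\`ere equation quantitative enough for the iteration to close, with errors absorbed at every scale above $r_\e$. I would first try to derive this from the ROT optimality conditions tested against affine functions on sections.
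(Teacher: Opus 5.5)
Your Lipschitz step is correct. As a self-contained proof, however, the proposal has a genuine gap at the central claim, the $\e$-uniform bound $\|\nabla^2\varphi_\e\|_{L^\infty(K_0)}\le C$ for $\e<\e_0$. Steps 2--3 (a quantitative approximate Monge--Amp\`ere equation at every scale above $r_\e=\e^{1/(d(p-1)+2)}$, and an iteration keeping sections of bounded eccentricity down to $r_\e$) are only announced. They are precisely what the paper imports from \cite[Corollary~4 and Lemma~10]{GvalaniKoch2026} rather than proves.

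Step~4 does not reduce this work. To blow up at scale $r_\e$ and land in a nondegenerate unit-scale problem, you must already know that the sections live at scale $r_\e$. Once you know that, the blow-up is superfluous. If $\mathrm{diam}(\mathcal S_x)\lesssim r_\e$ with $\mathcal S_x$ inside a fixed compact subset of $\Omega_1$, then \eqref{eq:Srodinger-system} forces $\max\xi(x,\cdot)\gtrsim r_\e^2$. Concavity of $\xi(x,\cdot)$ on the convex set $\mathcal S_x$ then turns \eqref{eq:Form2ndDerivative} directly into $|\nabla^2\varphi_\e(x)|\lesssim \mathrm{diam}(\mathcal S_x)^2/\max\xi(x,\cdot)\lesssim 1$ for $p\neq 2$, and into $|\nabla^2\varphi_\e(x)|\lesssim \mathrm{diam}(\mathcal S_x)^{d+2}/\e\lesssim 1$ for $p=2$. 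So the theorem is essentially equivalent to the sharp diameter bound on sections. \Cref{lemma:eq:VolumeLower} derives that bound \emph{from} the Hessian bound, so you cannot borrow it, and nothing in your steps 2--3 supplies it.

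Separately, your stage-2 estimate is global on the fixed pair $(\Omega_0,\Omega_1)$, with constants depending on $\mathrm{diam}(\Omega_1)$ and on the non-quantitative $\delta(\e)$ of \Cref{propo:ROTprelims}. It therefore does not apply uniformly to the rescaled problems, which are posed on domains of size $\sim r_\e^{-1}$ with marginals that are no longer probability measures. You would need a local, quantitative unit-scale estimate, and that again rests on the missing section control.

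There are three further problems. First, for the mixed bounds you use that $\mathcal S_x$ lies within $O(r_\e)$ of $T(x)$. This is not available. \Cref{lemma:eq:VolumeLower} localizes $\mathcal S_x$ around $\nabla\varphi_\e(x)$, not $T(x)$, and its proof already uses the bound on $\nabla^2\psi_\e$ over $\bigcup_{x\in K_0}\mathcal S_x$ that you are trying to prove. Moreover, the rate $r_\e$ for $\nabla\varphi_\e-T$ is only obtained in $L^2$ (\Cref{Coro:Rates-map}). What is needed is much weaker: $\bigcup_{x\in K_0}\mathcal S_x\subset K_1'\Subset\Omega_1$ for small $\e$, after which the interior bound on $K_1'$ applies. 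The paper gets this from the support localization $|y-T(x)|\lesssim\e^{2/((d(p-1)+2)(d+2))}$ for $(x,y)\in\supp\pi_\e$ with $x\in K_0$ \cite[Corollary~8]{GvalaniKoch2026}, combined with $\mathrm{d}(T(K_0),\partial\Omega_1)>0$.

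Second, in stage 2, ``compactness in $\e$'' presupposes continuity of $\e\mapsto(\varphi_\e,\psi_\e)$ and of $\delta(\e)$, which you have not established. The paper instead takes the $\e\ge\e_0$ case from \cite[Lemma~17]{GvalaniKoch2026}.

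Third, integrability of $\xi^{q-3}$ for $q<3$ does not follow from Lipschitz bounds on $\xi$. Those give $\xi(x,\cdot)\lesssim\mathrm{dist}(\cdot,\partial\mathcal S_x)$, which is the wrong direction. You need $\xi(x,\cdot)\gtrsim\mathrm{dist}(\cdot,\partial\mathcal S_x)$, which comes from concavity on convex sections plus a lower bound on $\max\xi(x,\cdot)$, as in the appendix.
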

\begin{proof}
With the exception of the final claim, the result is a combination of \cite[Corollary 4 and Lemma 10]{GvalaniKoch2026}.  Using \cite[Corollary~8]{GvalaniKoch2026},  we find $\e_0>0$ such that if $\e\leq \e_0$, for $(x,y)\in K_0\times \R^d \cap \supp\, \pi_\e$, $$|y-T(x)|\lesssim \e^\frac 2 {(d(p-1)+2)(d+2)}.$$ Note that $T(\partial \Omega_0)=\partial \Omega_1$ and so as $T$ is continuous and $\Omega_0$ is bounded, ${\rm d}(T(K_0),\partial \Omega_1)>0$. Reducing $\e_0$ further if necessary, it follows that $\cup_{x\in K_0} \mathcal S_x \Subset \Omega_1.$
An analogous argument for $y\in K_1$ establishes the final claim for $\e\leq \e_0$. For $\e\geq \e_0$ the claim follows from \cite[Lemma 17]{GvalaniKoch2026}.
\end{proof}

\subsection{Convex analysis}
The \emph{convex conjugate} (or \emph{Legendre--Fenchel transform}) of $f:\mathbb{R}^d \to (-\infty,+\infty]$ is the function 
$f^* : \mathbb{R}^d \to (-\infty,+\infty]$ defined by
\[
f^*(y) 
:= \sup_{x \in \mathbb{R}^d} 
\big\{ \langle x, y \rangle - f(x) \big\}.
\]
The following result is well known. It relates the boundedness of the second derivative of $f$ with the strong convexity of $f^*$. 
\begin{lemma}\label{lem:convex}
Suppose $f\colon \Omega \subset\R^d\to \R$ is convex and has second derivatives that are uniformly bounded in $\Omega$ by $L>0$ almost everywhere. Then for $x\in \Omega, y\in \R^d$ the following are equivalent:
\begin{enumerate}
\item $x\in \partial f^\ast(y)$, 
\item $\nabla f(x)=y$,
\item $f^\ast(y)+f(x)=\langle x,y\rangle$.
\end{enumerate}
Further, for any $y,y_0\in \textup{dom}\; f^\ast$ and $z\in \partial f^\ast(y_0)$, we have
\begin{align*}
f^\ast(y)\geq f^\ast(y_0)+\langle z,y-y_0\rangle +\frac 1 {2L} \|y-y_0\|^2.
\end{align*}
\end{lemma}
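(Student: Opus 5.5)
The statement to prove is \Cref{lem:convex}, a standard fact from convex analysis. I would prove it with the Fenchel--Young inequality and a second-order Taylor bound.

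\emph{Equivalences.} By the definition of $f^\ast$ (with $f$ extended by $+\infty$ off $\Omega$, so the supremum runs over $\Omega$), the Fenchel--Young inequality $f^\ast(y)+f(x)\ge\langle x,y\rangle$ holds for all $x,y$.

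For $(3)\Leftrightarrow(1)$: equality in Fenchel--Young means $x$ attains the supremum defining $f^\ast(y)$. For every $y'$ this gives
\[
f^\ast(y')\ge \langle x,y'\rangle-f(x)=f^\ast(y)+\langle x,y'-y\rangle ,
\]
so $x\in\partial f^\ast(y)$. Conversely, if $x\in\partial f^\ast(y)$, then the standard characterization of the subdifferential of a conjugate gives equality in (3). This uses $f^{\ast\ast}=f$ at $x$, which holds since $f$ is convex and continuous (second derivatives bounded a.e. make it locally $C^{1,1}$).

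For $(2)\Rightarrow(3)$: $f$ is differentiable (indeed $C^{1,1}$) and convex, so the concave map $x'\mapsto\langle x',y\rangle-f(x')$ has a critical point at $x$, hence a maximum there on convex $\Omega$. This is (3).

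For $(3)\Rightarrow(2)$: $x$ maximizes $\langle\cdot,y\rangle-f$. When $x$ is interior, differentiability gives $\nabla f(x)=y$. I expect boundary points to be the only delicate point. I would assume $\Omega$ is open and convex, which is the setting in which the lemma is used (interior regularity).

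\emph{Strong convexity of $f^\ast$.} Take $y_0$ and $z\in\partial f^\ast(y_0)$, so $\nabla f(z)=y_0$ by the equivalences. The a.e. Hessian bound $\|\nabla^2 f\|\le L$ makes $\nabla f$ $L$-Lipschitz on the convex set, since $\nabla f\in W^{1,\infty}$ and we can integrate along segments. This yields the upper quadratic bound
\[
f(x')\le f(z)+\langle y_0,x'-z\rangle+\tfrac L2\|x'-z\|^2 .
\]
Hence
\[
f^\ast(y)\ge \sup_{x'}\Big\{\langle x',y\rangle-f(z)-\langle y_0,x'-z\rangle-\tfrac L2\|x'-z\|^2\Big\}.
\]
Taking $x'=z+\frac1L(y-y_0)$ gives
\[
f^\ast(y)\ge \langle z,y\rangle-f(z)+\tfrac1{2L}\|y-y_0\|^2 .
\]
Using (3) in the form $f(z)=\langle z,y_0\rangle-f^\ast(y_0)$, this becomes
\[
f^\ast(y)\ge f^\ast(y_0)+\langle z,y-y_0\rangle+\tfrac1{2L}\|y-y_0\|^2 .
\]

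\emph{Main obstacle.} The step above needs $x'=z+\frac1L(y-y_0)$ to lie in $\Omega$. Without that, the bound still holds for $\|y-y_0\|$ small, where $x'$ stays inside. To get it globally I would instead extend $f$ to a convex function on $\R^d$ with $L$-Lipschitz gradient; such a $C^{1,1}$ convex extension exists by the Azagra--Mudarra theorem. I would then run the argument for the extension. Its conjugate is a lower bound for the conjugate of $f$ restricted to $\Omega$, and they agree at $y_0$ because $z\in\Omega$, so the bound transfers.
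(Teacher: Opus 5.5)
Your derivation of the inequality is the paper's own argument: the upper Taylor bound from the Hessian bound, the test point $x'=z+\frac1L(y-y_0)$, then the Fenchel equality at $(z,y_0)$. You are right that this step needs $x'\in\Omega$, and the paper does not check it either.

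The genuine gap is your repair. If $F$ is a convex extension of $f$ to $\R^d$, then $F\le f$ (with $f=+\infty$ off $\Omega$). Conjugation reverses order, so $F^\ast\ge f^\ast$: the conjugate of the extension is an \emph{upper} bound for $f^\ast$, not a lower bound. Hence $F^\ast(y)\ge F^\ast(y_0)+\langle z,y-y_0\rangle+\frac1{2L}\|y-y_0\|^2$, even with $F^\ast(y_0)=f^\ast(y_0)$, says nothing about $f^\ast(y)$.

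In fact no repair can give the global claim. For bounded $\Omega$, $f^\ast$ is a supremum of affine functions with slopes in $\Omega$, so it is $(\sup_{\Omega}\|x\|)$-Lipschitz on $\R^d$ and cannot dominate a quadratic. Concretely, take $d=1$, $\Omega=(0,1)$, $f(x)=x^2/2$, $L=1$, $y_0=z=\frac12$ and $y=3$. Then $f^\ast(3)=\frac52$, while the right-hand side is $\frac18+\frac54+\frac{25}8=\frac92$; this equals $F^\ast(3)$ for the extension $F(x)=x^2/2$, which illustrates the reversed comparison.

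Your step ``$z\in\partial f^\ast(y_0)$, so $\nabla f(z)=y_0$'' has a related problem. Points of $\partial f^\ast(y_0)$ need only lie in $\overline\Omega$; in the same example $\partial f^\ast(2)=\{1\}\not\subset\Omega$.

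What the argument proves is the local statement you already noticed, and it extends to any bounded range at the cost of the constant. Suppose $z\in\Omega$, $\nabla f(z)=y_0$, $\mathbb{B}(z,r)\subset\Omega$ and $\|y-y_0\|\le R$. Rerun your computation with $x'=z+\frac1{L'}(y-y_0)$ and $L'=\max(L,R/r)$. Then $x'\in\mathbb{B}(z,r)$, and since $L\le L'$,
\[
f^\ast(y)\ge f^\ast(y_0)+\langle z,y-y_0\rangle+\Big(\tfrac1{L'}-\tfrac{L}{2L'^2}\Big)\|y-y_0\|^2\ge f^\ast(y_0)+\langle z,y-y_0\rangle+\tfrac1{2L'}\|y-y_0\|^2 .
\]
This local form is what the application in \Cref{lemma:eq:VolumeLower} actually uses. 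There both arguments of $\psi_\e^\ast$ stay in a bounded set, and the primal point stays at positive distance from the boundary of the region where the Hessian bound holds. No $C^{1,1}$ extension theorem is needed.

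Your handling of the equivalences for open convex $\Omega$ is fine. It is in fact more careful than the paper's citation of Rockafellar, since (1)$\Rightarrow$(2) fails at boundary points of a closed $\Omega$.
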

\begin{proof}
Setting $f=+\infty$ outside of $\Omega$,  we obtain a convex, proper and lower-semicontinuous function on $\R^d$. The first claim is \cite[Theorem 23.5]{Rockafellar.Convex}. While the second statement is certainly not new, we provide a proof for the convenience of the reader. Due to the gradient Lipschitzness of $f$, there is $L>0$ such that for any $x,z\in\Omega$ and $y\in \R^d$,
\begin{align*}
\langle y,x\rangle -f(x)\geq \langle y,x\rangle - f(z)-\langle \nabla f(z),x-z\rangle - \frac{L} 2 \|z-x\|^2.
\end{align*}
Taking supremums over $x$, and choosing $x=z+\frac{y-\nabla f(z)} L$ on the right-hand side, we find
\begin{align}
f^\ast(y)\geq& \left\langle y,z+\frac{y-\nabla f(z)} L\right\rangle-f(z)-\left\langle \nabla f(z),\frac{y-\nabla f(z)} L\right\rangle - \frac 1 {2L} \|y-\nabla f(z)\|^2\notag \\
=& \langle y_0,z\rangle - f(z)+\langle y-y_0,z\rangle+\frac 1 L \|y-\nabla f(z)\|^2-\frac 1 {2L} |y-\nabla f(z)|^2,\label{eq:bound-f-star}
\end{align}
for every $y_0\in \R^d$. In particular, taking
 $z\in \partial f^\ast(y_0)$ in   \eqref{eq:bound-f-star}, we deduce
\begin{align*}
f^\ast(y)\geq& f^\ast(y_0)+\langle y-y_0,z\rangle+\frac 1 {2L} \|y-y_0\|^2,
\end{align*}
which concludes the proof. 
\end{proof}

\section{Proofs of results}

\subsection{Proof of \texorpdfstring{\Cref{thm:sparsity}}{}}
\label{Section:proof-main}

Fix $K_0\times K_1\Subset \Omega_0\times \Omega_1$. First we lower bound the diameter and Lebesgue measure  of the convex sections $\mathcal S_x$. We recall that by \Cref{thm:interiorC2}, for $\e\leq \e_0$ and a sufficiently small choice of $\e_0>0$, there exists a constant $L>0$, independent of $\varepsilon>0$, such that $\nabla \varphi_\varepsilon$ and $\nabla \psi_\varepsilon$ are $L$-Lipschitz on
\[
K_0 \cup \bigcup_{y\in K_1} \mathcal T_y \Subset \Omega_0
\quad \text{and} \quad
K_1 \cup \bigcup_{x\in K_0} \mathcal S_x\Subset \Omega_1,
\]
respectively.
\begin{lemma}\label[lemma]{lemma:eq:VolumeLower}
 There exists a constant $R_0>0$ such that,  for all $(x,y)\in K_0 \times K_1 $, 
$$ \mathcal S_x\subset \mathbb{B}\left(\nabla \varphi_\e(x), R_0 \e^\frac 1 {d(p-1)+2}\right) \quad \text{and} \quad \mathcal T_y\subset \mathbb{B}\left(\nabla \psi_\e(y), R_0 \e^\frac 1 {d(p-1)+2}\right).$$
As a consequence, for all $(x,y)\in K_0 \times K_1 $, 
\begin{align}\label{eq:VolumeUpper}
|\mathcal S_x|, |{\mathcal T}_y|\leq C_d R_0^d \e^\frac{d}{2+d(p-1)},
\end{align}
where $C_d$ is a dimensional constant. 
\end{lemma}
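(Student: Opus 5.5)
The plan is to squeeze $\xi(x,\cdot)$ between two quadratics. A \emph{lower} quadratic bound, together with the Schr\"odinger system \eqref{eq:Srodinger-system}, controls the height $\max_y \xi(x,y)$. An \emph{upper} quadratic bound, obtained from \Cref{lem:convex}, then converts positivity of $\xi$ into a distance estimate. I write $\xi(x,y)=\langle x,y\rangle-\varphi_\e(x)-\psi_\e(y)$ and $M_x:=\sup_y \xi(x,y)$, and treat only $\mathcal S_x$; the statement for $\mathcal T_y$ is symmetric. Throughout I use that $\nabla\varphi_\e$ and $\nabla\psi_\e$ are $L$-Lipschitz on the neighbourhoods of $K_0$ and $K_1$ recorded just before the lemma (\Cref{thm:interiorC2}).

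\emph{Step 1: height bound.} Since $\mathcal S_x$ is compactly inside $\Omega_1$, let $y_0\in \overline{\mathcal S_x}$ be a maximiser of $\xi(x,\cdot)$, so that $\nabla\psi_\e(y_0)=x$. The bound $|\nabla^2\psi_\e|\le L$ gives
\[
\xi(x,y)\ge M_x-\tfrac L2\|y-y_0\|^2 .
\]
Hence on the ball $\mathbb B(y_0,\sqrt{M_x/L})$ we have $\xi\ge M_x/2$. I first make sure this ball stays in the region where the Hessian bound and the lower density bound for $\mu$ hold; this follows because $M_x\to0$ uniformly, since $\|f_\e\oplus g_\e\|_\infty$ is bounded and the support shrinks as in the proof of \Cref{thm:interiorC2}. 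Plugging this into the first equation of \eqref{eq:Srodinger-system} yields
\[
\e^{q-1}q^{q-1}\gtrsim M_x^{q-1}M_x^{d/2},
\qquad\text{so}\qquad
M_x\lesssim \e^{\frac{q-1}{q-1+d/2}}=\e^{\frac{2}{d(p-1)+2}} ,
\]
using $q-1=1/(p-1)$. The same argument with the roles swapped bounds $N_y:=\sup_{x'}\xi(x',y)$ by the same power of $\e$.

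\emph{Step 2: quadratic decay away from $\nabla\varphi_\e(x)$.} Let $f$ be $\varphi_\e$ restricted to a convex neighbourhood $U\Subset\Omega_0$ on which $|\nabla^2\varphi_\e|\le L$, and take $U$ to contain $x$ and all $\mathcal T_y$ for $y\in\mathcal S_x$. Apply \Cref{lem:convex} with $y_0=\nabla\varphi_\e(x)$ and $z=x$, using $f^*(y_0)=\langle x,y_0\rangle-\varphi_\e(x)$:
\[
\langle x,y\rangle-\varphi_\e(x)\le f^*(y)-\tfrac1{2L}\|y-\nabla\varphi_\e(x)\|^2 .
\]
Next, for $y\in\mathcal S_x$, the supremum defining $f^*(y)$ is attained, or at least approached, at points $x'$ with $\nabla\varphi_\e(x')\approx y$, which lie in $\mathcal T_y\subset U$. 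Therefore
\[
f^*(y)=\sup_{x'\in U}\bigl(\langle x',y\rangle-\varphi_\e(x')\bigr)\le \psi_\e(y)+N_y .
\]
Combining the two displays gives $\xi(x,y)\le N_y-\frac1{2L}\|y-\nabla\varphi_\e(x)\|^2$. Positivity of $\xi(x,y)$ on $\mathcal S_x$ and Step 1 then give
\[
\|y-\nabla\varphi_\e(x)\|^2\le 2LN_y\lesssim \e^{\frac{2}{d(p-1)+2}},
\]
which is the inclusion in the ball of radius $R_0\e^{\frac1{d(p-1)+2}}$. The volume bound \eqref{eq:VolumeUpper} is then immediate.

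\emph{Main obstacle.} The delicate point is localisation in Step 2. \Cref{lem:convex} needs the Hessian bound on a convex set, and I must justify that the supremum defining $f^*(y)$ may be restricted to that set without losing the comparison with $\psi_\e(y)+N_y$. I would handle this with the a priori (non-sharp) shrinking $|y-T(x)|\lesssim \e^{\frac{2}{(d(p-1)+2)(d+2)}}$ from the proof of \Cref{thm:interiorC2}. This shows that for $\e\le\e_0$ every relevant point stays in a fixed compact convex neighbourhood. It also shows that the maximiser $x'$ of $\langle x',y\rangle-\varphi_\e(x')$ over that neighbourhood is interior, by uniform strict monotonicity of $\nabla\varphi_\e$ near $T^{-1}(y)$, inherited from $T$ being a diffeomorphism. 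The same compactness argument justifies the ball used in Step 1.
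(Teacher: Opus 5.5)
Your proposal is correct and is essentially the paper's proof. A quadratic lower Taylor bound at the maximiser, combined with \eqref{eq:Srodinger-system}, gives the height bound \eqref{eq:max}, and the strong convexity of the conjugate from \Cref{lem:convex} then bounds the Fenchel gap from below; the paper writes out the $\mathcal T_y$ case via $\psi_\e^\ast$ and $\max_z\xi(x,z)$, and your $\mathcal S_x$ case via $\varphi_\e^\ast$ and $N_y$ is exactly its ``analogous argument''.

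On localisation, the bound $f^\ast(y)\le\psi_\e(y)+N_y$ is trivial because $U\subset\Omega_0$. No monotonicity of $\nabla\varphi_\e$ is needed either: that would anticipate \Cref{Corollary:InteriorStrongConvexity}, and the a priori shrinking alone puts the maximiser inside $\mathcal T_y$. What actually needs checking is that the quadratic lower bound in \Cref{lem:convex} for $\varphi_\e|_U$ uses the test point $x+(y-\nabla\varphi_\e(x))/L$. This point lies in $U$ if $U$ is a fixed ball around $x$, since $\nabla\varphi_\e(x)$ is an average over $\mathcal S_x$ and hence $\|y-\nabla\varphi_\e(x)\|\le\operatorname{diam}(\mathcal S_x)\to0$.
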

\begin{proof}
It clearly suffices to prove the claim for $\e\leq \e_0$. 
    Set $x \in K_0$ and let $y_0 \in \partial \psi_\varepsilon^\ast(x)$. 
Using Fenchel's inequality together with \Cref{lem:convex}, we obtain 
$\xi(x,y_0) \ge 0$, and hence $y_0 \in \mathcal S_x$. 
Moreover, by \Cref{lem:convex} we have $x = \nabla \psi_\varepsilon(y_0)$. 
Combining this with the Lipschitz regularity of $\nabla \psi_\varepsilon$, 
we deduce that there exists a constant $C=C(L)>0$ such that
\begin{align}\label{eq:Taylor}
\xi(x,y) 
&= \xi(x,y_0) + \langle x, y-y_0 \rangle 
   + \psi_\varepsilon(y_0) - \psi_\varepsilon(y) \ge \xi(x,y_0) - C \|y-y_0\|^2 .
\end{align}
The monotonicity of $(\cdot)_+^{q-1}$, \eqref{eq:Taylor} and \eqref{eq:Srodinger-system} yield 
\begin{align*}
c(q)\varepsilon^{\,q-1} 
&\ge \int
   \big(\xi(x,y_0)-C \|y-y_0\|^2\big)^{q-1}_+\, \dd\mu(y) \\
&\gtrsim \int_0^\infty 
   \big(\xi(x,y_0)-C r^2\big)_+^{q-1} r^{d-1}\, dr \gtrsim \xi(x,y_0)^{\,q-1+\frac d2}, 
\end{align*}
Rearranging the inequality yields the estimate
\begin{align}\label{eq:max}
\max_{y} \xi(x,y)=\max_{y\in \mathcal S_x} \xi(x,y)
=  \xi(x,y_0)
\lesssim \varepsilon^{\frac{2}{d(p-1)+2}} .
\end{align}
We denote $D(x,y)\coloneqq \psi_\e^\ast(x)+\psi_\e(y)-\langle x,y\rangle.$     Using Fenchel's inequality and \eqref{eq:max}, we derive for $y\in \mathcal S_x$,
\begin{align}\label{eq:DIntroduce}
\xi(x,y) =& \psi_\e^\ast(x)-\varphi_\e(x)-(\psi_\e^\ast(x)+\psi_\e(y)-\langle x,y\rangle)\\
&= \max_{z\in \mathcal S_x} \xi(x,z)-D(x,y)
\leq c \e^\frac 2{2+d(p-1)}-D(x,y),
\end{align}
for some $c>0$. Using the uniform convexity of $\psi_\e^\ast$ from  Theorems~\ref{thm:interiorC2} and \ref{lem:convex}, as well as that $y\in\partial \psi_\e^\ast(\nabla \psi_\eps(y))$, we estimate
\begin{align}\label{eq:DLowerBound}
D(x,y)\geq& \psi_\e^\ast(\nabla \psi_\e(y))+\langle y,x-\nabla \psi_\e(y)\rangle + C \|\nabla \psi_\e(y)-x\|^2+\psi_\e(y)-\langle x,y\rangle\\
=& \psi_\e^\ast(\nabla \psi_\e(y))+\psi_\e(y)-\langle y,\nabla \psi_\e(y) \rangle+C\|\nabla \psi_\e(y)-x\|^2 = C \|\nabla \psi_\e(y)-x\|^2.
\end{align}
The last equality holds due to the equality case of Fenchel's inequality (cf.~\Cref{lem:convex}). Combining \eqref{eq:DLowerBound}, \eqref{eq:DIntroduce} and noting that for $x\in \mathcal T_y$, $\xi(x,y)\geq 0$, we find
\begin{align*}
\max_{x\in \partial \mathcal T_y} \|\nabla \psi_\e(y)-x\|\lesssim \e^\frac 2 {2+d(p-1)}.
\end{align*}
This gives the upper bound for $\mathcal T_y$. An analogous argument provides the upper bound for $\mathcal S_x$.

\end{proof}

In the following lemma we establish the lower bound in \Cref{thm:sparsity}. With this result, the proof of \Cref{thm:sparsity} is complete.
 \begin{lemma}
 There exists a constant $R_0>0$ such that,  for $\e\leq \e_0$ and for all $(x,y)\in K_0 \times K_1 $,
$$ \mathbb{B}\left(\nabla \varphi_\e(x),\frac 1 {R_0} \e^\frac 1 {d(p-1)+2}\right)\subset \mathcal S_x \quad {\rm and} \quad \mathbb{B}\left(\nabla \psi_\e(y),\frac 1 {R_0} \e^\frac 1 {d(p-1)+2}\right)\subset \mathcal T_y.$$
  If $\Omega_1$ is convex, the statement holds for $\e\in (0,1]$.
 \end{lemma}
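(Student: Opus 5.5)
Fix $x\in K_0$ and set $y_0:=\nabla\varphi_\e(x)$, $r:=\e^{\frac1{d(p-1)+2}}$. By \Cref{Proposition-convex-and-first-derivatives}, $y_0\in\mathcal S_x$. The plan has three steps. First, a lower bound on the size of $\xi(x,\cdot)$ near its maximum. Second, a second-order expansion of $\xi(x,\cdot)$ that keeps it positive on a ball of radius $\sim r$ around the maximizer. Third, a transfer of that ball from the maximizer to the barycentric point $\nabla\varphi_\e(x)$ using the upper bound of \Cref{lemma:eq:VolumeLower}. The statement for $\mathcal T_y$ is symmetric.

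\textbf{Step 1 (lower bound on the maximum).} Let $M:=\max_y\xi(x,y)=\xi(x,\bar y)$, where $\bar y\in\partial\psi_\e^\ast(x)$, as in the proof of \Cref{lemma:eq:VolumeLower}. Combining \eqref{eq:Srodinger-system} with the upper bound \eqref{eq:VolumeUpper} gives
\[
c(q)\e^{q-1}=\int_{\mathcal S_x}\xi(x,y)^{q-1}\,\dd\mu(y)\le \|\mu\|_\infty M^{q-1}|\mathcal S_x|\lesssim M^{q-1}\e^{\frac{d}{d(p-1)+2}}.
\]
Since $q-1=\frac1{p-1}$, this yields $M^{q-1}\gtrsim \e^{\frac{(q-1)(d(p-1)+2)-d}{d(p-1)+2}}=\e^{\frac{2(q-1)}{d(p-1)+2}}$, that is, $M\gtrsim r^2$. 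Together with \eqref{eq:max}, we get $M\approx r^2$.

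\textbf{Step 2 (a ball around the maximizer).} By \eqref{eq:Taylor}, which uses the uniform $L$-Lipschitz bound on $\nabla\psi_\e$ on $\bigcup_{x\in K_0}\mathcal S_x$ from \Cref{thm:interiorC2}, we have $\xi(x,y)\ge M-C\|y-\bar y\|^2>0$ whenever $\|y-\bar y\|<\sqrt{M/C}$. By Step 1 this gives $\mathbb B(\bar y,c_1 r)\subset\mathcal S_x$. Here one needs the ball to lie inside $\Omega_1$, where $\psi_\e$ has the stated regularity. For $\e\le\e_0$ this follows because $\bigcup_{x\in K_0}\mathcal S_x\Subset\Omega_1$ and $\bar y$ is within $R_0r$ of points of $\mathcal S_x$, so shrinking $\e_0$ handles it. When $\Omega_1$ is convex, the final claim of \Cref{thm:interiorC2} covers $\e\ge\e_0$, so the constants extend to $\e\in(0,1]$ after adjusting them.

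\textbf{Step 3 (recentering at $\nabla\varphi_\e(x)$; main obstacle).} The target ball is centred at $y_0=\nabla\varphi_\e(x)$, not at $\bar y$. Both points lie in $\mathcal S_x$, which is convex and contains $\mathbb B(\bar y,c_1r)$ by Step 2. It is contained in $\mathbb B(y_0,R_0r)$ by \Cref{lemma:eq:VolumeLower}, so $\|y_0-\bar y\|\le R_0 r$. This distance bound alone does not give a ball around $y_0$. The argument I would try first uses convexity together with a lower bound on $\xi(x,y_0)$:

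1. Show $\xi(x,y_0)\gtrsim r^2$. For $q\ge2$, $y_0$ is the $\xi^{q-2}$-weighted barycenter, and $\xi^{q-2}\lesssim r^{2(q-2)}$ on $\mathcal S_x$. Using $\mathbb B(\bar y,c_1r)\subset\mathcal S_x$ and $\mathcal S_x\subset\mathbb B(y_0,R_0r)$, one shows that the weight is comparable to a normalized measure of size $r^d$ supported where $\xi\ge c r^2$. Then the concavity-type estimate along the segment from $\bar y$ to $y_0$ (again \eqref{eq:Taylor}) together with averaging keeps $y_0$ a fixed fraction inside, in the sense that $\xi(x,y_0)\ge c r^2$.
2. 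Alternatively, use the second inequality of \Cref{lem:convex}: since $\psi_\e$ has Hessian bounded by $L$, a Taylor bound around $y_0$ gives $\xi(x,y)\ge \xi(x,y_0)-\langle \nabla\psi_\e(y_0)-x,y-y_0\rangle-\tfrac L2\|y-y_0\|^2$, with $\|\nabla\psi_\e(y_0)-x\|\lesssim r$ from the proof of \Cref{lemma:eq:VolumeLower}. This alone is too weak unless $\xi(x,y_0)\gtrsim r^2$. The cleaner route is therefore item 1 followed by convexity: once $\xi(x,y_0)\ge cr^2$, apply \eqref{eq:Taylor}-type bounds centred at $\bar y$ and the convex hull of $\{y_0\}\cup\mathbb B(\bar y,c_1r)$. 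Since $\|y_0-\bar y\|\le R_0r$, this hull contains a ball around $y_0$ of radius $\sim (c_1/R_0)\,\mathrm{dist}(y_0,\partial\mathcal S_x)$.
3. Bound $\mathrm{dist}(y_0,\partial\mathcal S_x)\gtrsim r$. This follows from $\xi(x,y_0)\gtrsim r^2$ and the Lipschitz bound $|\nabla_y\xi(x,\cdot)|=\|x-\nabla\psi_\e(\cdot)\|\lesssim r$ on $\mathcal S_x$, which comes from $\mathcal T_y\subset\mathbb B(\nabla\psi_\e(y),R_0r)$. Indeed, $\xi$ has to drop by $\gtrsim r^2$ at slope $\lesssim r$, which takes distance $\gtrsim r$.

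Item 3 makes item 2's hull argument unnecessary, giving $\mathbb B(y_0,r/R_0)\subset\mathcal S_x$ after enlarging $R_0$. The delicate step is item 1, the lower bound $\xi(x,\nabla\varphi_\e(x))\gtrsim r^2$, especially for $q<2$ (that is, $p=2$, $q=2$, where the weight is uniform on $\mathcal S_x$ and $y_0$ is the $\mu$-barycenter). In that case I would combine the comparability $\mathbb B(\bar y,c_1r)\subset\mathcal S_x\subset \mathbb B(\bar y,2R_0r)$, the near-uniformity of $\mu$ at scale $r$ from its $C^{0,\alpha}$ density, and the standard fact that the barycenter of a convex body with inner/outer radius ratio bounded lies at distance $\gtrsim$ inradius from its boundary (John's lemma).
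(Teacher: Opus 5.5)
\textbf{Gap: the central estimate $\xi(x,\nabla\varphi_\e(x))\gtrsim r^2$ is not proved.} Your overall architecture is sound, and your item 3 is a valid, slightly leaner replacement for the paper's final step. Once $\xi(x,\nabla\varphi_\e(x))\gtrsim r^2$ is known, the bound $\|\nabla_y\xi(x,\cdot)\|=\|x-\nabla\psi_\e(\cdot)\|\le R_0 r$ on the open convex set $\mathcal S_x$ forces $\mathrm{dist}(\nabla\varphi_\e(x),\partial\mathcal S_x)\gtrsim r$; this bound comes from the upper containment of $\mathcal T_y$. The paper does the same thing with a second-order expansion at $\nabla\varphi_\e(x)$ and Young's inequality. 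The gap is item 1, which is the heart of the lemma and which you leave as a sketch. The mechanism you suggest for $q>2$ fails as stated. Using \eqref{eq:Taylor} along the segment from $\bar y$ to $y_0$ only gives $\xi(x,y_0)\ge M-C\|y_0-\bar y\|^2$. This is vacuous, since $\|y_0-\bar y\|$ can be of order $R_0 r$, and ``averaging'' is never made precise. (Your convex-hull claim in item 2 is also false: the hull of a point and a ball contains no ball centred at that point. You discard it, though.) Also, $q\ge 2$ throughout, since $p\in(1,2]$, so there is no case $q<2$.

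\textbf{The missing idea is a one-line Jensen inequality.} By \eqref{eq:derivative}, $\nabla\varphi_\e(x)$ is the barycenter of the probability measure $\nu_x$ obtained by normalizing $\xi(x,\cdot)^{q-2}\mu$ restricted to $\mathcal S_x$. Moreover, $\xi(x,\cdot)$ is concave on the convex set $\mathcal S_x$. Hence
\[
\xi(x,\nabla\varphi_\e(x))\ \ge\ \int\xi(x,y)\,\dd\nu_x(y)\ =\ \frac{\int_{\mathcal S_x}\xi(x,y)^{q-1}\,\dd\mu(y)}{\int_{\mathcal S_x}\xi(x,y)^{q-2}\,\dd\mu(y)}\ =\ \frac{\e^{q-1}q^{q-1}}{\int_{\mathcal S_x}\xi(x,y)^{q-2}\,\dd\mu(y)}.
\]
Since $q\ge2$, the denominator is at most $\|\mu\|_\infty\bigl(\max\xi(x,\cdot)\bigr)^{q-2}|\mathcal S_x|\lesssim r^{2(q-2)+d}$ by \eqref{eq:max} and \eqref{eq:VolumeUpper}. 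This gives $\xi(x,\nabla\varphi_\e(x))\gtrsim r^2$ uniformly in $p\in(1,2]$. The case $p=2$, which you expected to be the hardest, is the simplest: it reads $\xi(x,\nabla\varphi_\e(x))\ge 2\e/\mu(\mathcal S_x)$. With this step your Steps 1 and 2 (the lower bound on the maximum and the inscribed ball around the maximiser) become unnecessary. Your John/Gr\"unbaum-type alternative, which places the weighted barycenter at depth of order the inradius, could probably be made rigorous. For $p=2$ the density is bounded above and below. For $q>2$ you would need a homothety argument exploiting the concavity of $\xi$. As written, however, neither version is carried out, so the key lower bound remains unproved in your proposal.
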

 \begin{proof} We first assume $\e\leq \e_0$. Set $x_0\in K_0$ and $y_0\in K_1$. Increase $K_1 \Subset \Omega_1$ in such a way that $ \bigcup_{x\in K_0} \mathcal{S}_x \subset K_1 $. 
Since for $x\in K_0$, the function $y\mapsto \xi(x,y)$ is concave on $\mathcal S_x$ and $\nabla \varphi_\e(x) $ is an average (see \eqref{eq:derivative}),  Jensen's inequality yields
\begin{align}\label{eq:Jensen}
\xi(x_0,\nabla \varphi_\e(x_0))\geq \left(\int_{\mathcal S_{x_0}} \xi(x_0,y)^{q-2}\;\dd\mu(y)\right)^{-1} \int_{\mathcal S_{x_0}}\xi(x_0,y)^{q-1}\;\dd\mu(y).
\end{align}
Using \eqref{eq:max} and \eqref{eq:VolumeUpper}, we find
\begin{align}\label{eq:JensenDenominator}
\int_{\mathcal S_{x_0}} \xi({x_0},y)^{q-2}\;\dd\mu(y)\lesssim \max_{y\in \mathcal S_{x_0}} \xi({x_0},y)^{q-2} | \mathcal S_{x_0}|\lesssim \e^\frac{2(q-2)} {2+d(p-1)}\e^\frac{d}{2+d(p-1)}.
\end{align}
Combining \eqref{eq:Jensen}, \eqref{eq:JensenDenominator} and \eqref{eq:Srodinger-system}, we deduce
\begin{align}\label{eq:lowerBoundExact}
\xi({x_0},\nabla \varphi_\e({x_0}))\gtrsim \e^\frac 2 {d(p-1)+2}.
\end{align}
which in particular implies $\nabla \varphi_\eps({x_0})\in \mathcal{S}_{x_0}$.
Since
\begin{align*}
    \xi(x_0,\nabla \varphi_\eps({x_0}))&= \langle {x_0},\nabla \varphi_\eps({x_0}) \rangle  -\varphi_\e({x_0})-\psi_\e(\nabla \varphi_\eps({x_0}))\\
    &=  \xi(x_0,y_0)+\psi_\e(y_0)-\psi_\e(\nabla \varphi_\eps(x_0))+\langle x_0,\nabla \varphi_\e(x_0)-y_0\rangle , 
\end{align*}
 the $L$-Lipschitz regularity of $\nabla\psi_\e$ in $\bigcup_{x\in K_0}\mathcal S_x$ yields 
 $$  \xi(x_0,\nabla \varphi_\eps(x_0))\leq 
     \xi(x_0,y_0)+\langle x_0-\nabla \psi_\e(\nabla\varphi_\e(x_0)),\nabla \varphi_\e(x_0)-y_0\rangle +L \| y_0- \nabla\varphi_\e(x_0)\|^2.  $$
 From the inequality $|ab|\leq \delta a^2 +b^2/(4\delta)$, we derive 
  $$  \xi(x_0,\nabla \varphi_\eps(x_0))\leq 
     \xi(x_0,y_0)+\delta\| x_0-\nabla \psi_\e(\nabla\varphi_\e(x_0))\|^2 +\left(L+\frac{1}{4\delta}\right) \| y_0- \nabla\varphi_\e(x_0)\|^2,  $$
 which, in virtue of $x_0\in \mathcal{T}_{\nabla\varphi_\e(x_0)} $ and \Cref{lemma:eq:VolumeLower}, implies
 \begin{equation}\label{eq:TaylorExact}
      \xi(x_0,\nabla \varphi_\eps(x_0))\leq 
     \xi(x_0,y_0)+\delta  R_0^2 \e^\frac 2 {d(p-1)+2} +\left(L+\frac{1}{4\delta}\right) \| y_0- \nabla\varphi_\e(x_0)\|^2. 
 \end{equation}
Combining  \eqref{eq:lowerBoundExact}  with \eqref{eq:TaylorExact}, choosing $\delta$ sufficiently small, we find $c,C>0$ such that for all $x_0\in K_0$ and $y_0\in K_1\supset \bigcup_{x\in K_0}\mathcal S_x$,
\begin{align}\label{eq:lower-bound}
c\e^\frac 2 {d(p-1)+2}-C \|y_0-\nabla \varphi_\e(x_0)\|^2 \leq \xi(x_0,y_0),
\end{align}
which concludes the statement  for $\mathcal{S}_x$.   An analogous argument provides the inner containment  bound for $\mathcal T_y$.

In the case $\e\geq \e_0$, we comment that $\mathcal S_x\cap \Omega_1$ is a convex set and hence for $x\in K_0$, $y\to \xi(x,y)$ is concave on $\mathcal S_x\cap \Omega_1$. This allows us to carry out the proof as before.
\end{proof}

\subsection{Proof of \texorpdfstring{\Cref{Corollary:InteriorStrongConvexity}}{}}\label{Section:proof-of-corollary}
\begin{proof}[Proof of  \Cref{Corollary:InteriorStrongConvexity}] 
Set $K_0\Subset \Omega_0$ and  $\eps_0>0$  such that $\bigcup_{x\in K_0} \bigcup_{y\in \mathcal{S}_x  } \mathcal{T}_{y}\subset \widetilde{K}_0\Subset \Omega_0 $ for all $\eps\leq \eps_0$. We prove the cases $p=2$, $p\in (1,3/2]$ and $p\in (3/2,2)$ separately. 

{\it Case $p=2$}: 
For $x\in K_0$, note that  $y\in \partial \mathcal{S}_x$ if and only if $\xi(x,y)=0$, which is also equivalent to  $x\in \partial \mathcal{T}_y$. Hence,   $$\|x-\nabla \psi_\eps(y)\|\leq {\rm diam}(\mathcal{T}_y) \lesssim \eps^{\frac{1}{d+2}}. $$ 
Combining this with \eqref{eq:Form2ndDerivative}, we find for almost every $x\in K_0$,
$$ A\coloneqq\langle \nabla^2 \varphi_\e(x) h, h\rangle \geq \frac{\int_{\partial \mathcal{S}_x}  {\left(\langle \nabla \varphi_\e(x)- y, h \rangle \right)^2} \mu(y) d\mathcal{H}^{d-1}(y)}{ \eps^{\frac{1}{d+2}} \mu(\mathcal{S}_x)} , $$
from which, using the estimate $ \mu(\mathcal{S}_x) \lesssim  \eps^{\frac{d}{d+2}}$ and the fact that  $ \inf_{\Omega_1}\mu >0$, we derive 
$$ A \gtrsim \frac{\int_{\partial \mathcal{S}_x }  {\left(\langle \nabla \varphi_\e(x)- y, h \rangle \right)^2} d\mathcal{H}^{d-1}(y)}{ \eps^{\frac{d+1}{d+2}}} . $$
We define the cone $\mathcal{C}_h=\{y:  \langle y, h\rangle \geq \frac{1}{2} \|y\|\}$. Reducing the integration to this cone we get 
\begin{align*}
    A &\gtrsim \frac{\int_{\partial \mathcal{S}_x \cap (\nabla \varphi_\e(x) + \mathcal{C}_h)}  {\left(\langle \nabla \varphi_\e(x)- y, h \rangle \right)^2} d\mathcal{H}^{d-1}(y)}{ \eps^{\frac{d+1}{d+2}}}  \\
    &\gtrsim \frac{\int_{\partial \mathcal{S}_x \cap (\nabla \varphi_\e(x) + \mathcal{C}_h)}  {\| \nabla \varphi_\e(x)- y\|^2} d\mathcal{H}^{d-1}(y)}{ \eps^{\frac{d+1}{d+2}}} \gtrsim  \frac{\mathcal{H}^{d-1}(\partial \mathcal{S}_x \cap (\nabla \varphi_\e(x) + \mathcal{C}_h))}{ \eps^{\frac{d-1}{d+2}}} ,
\end{align*}
where the last estimate follows from  \Cref{thm:sparsity}. To conclude, we define $$\gamma_\eps(y)=  \eps^{-\frac{1}{d+2}} (y-\nabla \varphi_\e(x))   $$ and observe that $$\mathcal{H}^{d-1}(\partial \mathcal{S}_x \cap (\nabla \varphi_\e(x) + \mathcal{C}_h)) = \eps^{\frac{d-1}{d+2}} \mathcal{H}^{d-1}(\gamma_\eps(\partial \mathcal{S}_x \cap (\nabla \varphi_\e(x) + \mathcal{C}_h))) . $$ 
  \Cref{thm:sparsity} yields $$\mathbb{B}(0,\frac{1}{C})\cap \mathcal{C}_h\subset \gamma_\eps(\mathcal{S}_x \cap (\nabla \varphi_\e(x) + \mathcal{C}_h)) \subset \mathbb{B}(0,C) \cap \mathcal{C}_h,  $$
for some $C>0$, irrespective of $\eps$. Hence,  $\mathcal{H}^{d-1}(\gamma_\eps(\partial\mathcal{S}_x \cap (\nabla \varphi_\e(x) + \mathcal{C}_h)))\geq \frac{1}{C'}$ for some $C'>0$ depending just on $C$ and the dimension $d$,  and the result follows for $p=2$.

{\it Case $p\in (1,3/2]$ (i.e., $q\geq 3$).} As before, using \eqref{eq:Form2ndDerivative} and restricting the integration to the cone $\mathcal C_h$, we find for a.e. $x\in K_0$,
$$ A\coloneqq \langle \nabla^2 \varphi_\e(x)h,h\rangle\gtrsim \frac{\int_{ \mathcal S_x \cap (\nabla \varphi_\e(x) +\mathcal{C}_h)}  \|\nabla \varphi_\e(x)- y\|^2 (\xi(x,y))_+^{q-3} d \mu(y)  }{ \int (\xi(x,y))_+^{q-2} d \mu(y)  }.   $$
Further \eqref{eq:lower-bound} yields 
$ c\e^\frac 2 {d(p-1)+2}-C \|y-\nabla \varphi_\e(x)\|^2 \leq \xi(x,y) $, so that 
\begin{align*}
    A&\gtrsim \frac{\int_{ \mathcal S_x\cap (\nabla \varphi_\e(x) +\mathcal{C}_h)}   (\e^\frac 2 {d(p-1)+2}- \|y-\nabla \varphi_\e(x)\|^2)_+^{q-3}  \|y-\nabla \varphi_\e(x)\|^2 \dd \mu(y)  }{ \int (\xi(x,y))_+^{q-2} \dd \mu(y)  } \\
    &\approx \frac{\e^\frac{2q+d-4} {d(p-1)+2}}{ \int (\xi(x,y))_+^{q-2} \dd \mu(y)  } . 
\end{align*}
$$   $$
Use \eqref{eq:max} and  \Cref{thm:sparsity} to derive 
\begin{equation}
    \label{bound-lp-density}
    \int (\xi(x,y))_+^{q-2} \dd \mu(y)  \lesssim  \e^\frac{2(q-2)} {d(p-1)+2}|\mathcal{S}_x| \lesssim     \e^\frac{2(q-2)+d} {d(p-1)+2},
\end{equation}
and conclude.

 {\it Case $p\in (3/2,2)$}
Since $q<3$,  \eqref{eq:max} yields $(\xi(x,y))^{q-3} \gtrsim \e^\frac{2(q-3)} {d(p-1)+2}. $ {\it A fortiori}, restricting to $\mathcal C_h$ as before,
$$ A\gtrsim \frac{\e^\frac{2(q-3)} {d(p-1)+2} \int_{(\nabla \varphi_\e(x) +\mathcal{C}_h)\cap \mathcal{S}_x}   |y-\nabla \varphi_\e(x)|^2 \dd \mu(y)  }{ \int (\xi(x,y))_+^{q-2} \dd \mu(y)  } \approx \frac{\e^\frac{2q+d-4} {d(p-1)+2}  }{ \int (\xi(x,y))_+^{q-2} \dd \mu(y)  } \gtrsim 1, $$
where we also used \Cref{thm:sparsity}. This  concludes the proof.



\end{proof}

\section{Explicit solution for self-transport with Lebesgue marginals}\label{sec:Explicit}
We now specialise to the case of \emph{self-transport} with Lebesgue marginals, i.e.\ $\lambda = \mu = \mathcal{L}$,
the normalised Lebesgue measure on the flat unit torus $\mathbb{T}^d = (\R/\Z)^d$,
equipped with the geodesic distance
$d_{\mathbb{T}^d}(x,y) \coloneqq \min_{k\in\mathbb{Z}^d}\|x-y-k\|$. We will use this to argue that the sparsity rates obtained in \Cref{thm:sparsity} are sharp.
In this setting, the problem \eqref{ROT} reduces to
\begin{equation}\label{ROT-self}
    \inf_{\pi \in \Pi(\mathcal{L},\mathcal{L}),\; \pi \ll \mathcal{L}\otimes\mathcal{L}}
    \int \frac{1}{2}\,d_{\mathbb{T}^d}(x,y)^2 \;\dd\pi
    \;+\;
    \e \int h_p\!\left(\frac{\dd\pi}{\dd(\mathcal{L}\otimes\mathcal{L})}\right)
    \dd(\mathcal{L}\otimes\mathcal{L}) \, ,
\end{equation}
and the identity map $T = \mathrm{Id}$ is the unique OT map from $\mathcal{L}$ to itself.

By the symmetry of \eqref{ROT-self} under the exchange $(x,y)\mapsto (y,x)$,
the dual optimisers $(f_\e, g_\e)$ of \eqref{Dual} satisfy $f_\e = g_\e$
$\mathcal{L}$-almost everywhere.
Substituting into \eqref{eq:density-ROT}, the ROT plan $\pi_\e$ has density
\begin{equation}\label{eq:density-self}
    \rho_\e(x,y)
    = \frac{\dd\pi_\e}{\dd(\mathcal{L}\otimes\mathcal{L})}(x,y)
    = \frac{1}{\e^{q-1}q^{q-1}}
      \Bigl(f_\e(x) + f_\e(y) - \tfrac{1}{2}\,d_{\mathbb{T}^d}(x,y)^2\Bigr)_+^{q-1}.
\end{equation}
Moreover, since the Lebesgue measure $\mathcal{L}$ is invariant under all translations
$\tau_z \colon x \mapsto x+z \pmod{1}$ on $\mathbb{T}^d$,
if  $(f_\e, f_\e)$ is a dual optimizer of \eqref{Dual}, then so is
$(f_\e(\cdot + z), f_\e(\cdot + z))$ for every $z\in\mathbb{T}^d$. As a result, uniqueness of dual optimizers up to an additive constant
(cf.~\cite[Theorem~3.2]{GonzalezSanzEcksteinNutz.25}) forces $f_\e$ to be constant,
$f_\e \equiv C_\e \in \R$.
The density \eqref{eq:density-self} therefore simplifies to
\begin{equation}\label{eq:density-torus}
    \rho_\e(x,y)
    = \frac{1}{\e^{q-1}q^{q-1}}
      \Bigl(2C_\e - \tfrac{1}{2}\,d_{\mathbb{T}^d}(x,y)^2\Bigr)_+^{q-1},
\end{equation}
which depends on $(x,y)$ only through $d_{\mathbb{T}^d}(x,y)$. The constant $C_\e$ is uniquely determined by the marginal constraint,
which via the Schr\"{o}dinger system \eqref{eq:Srodinger-system} reads
\begin{equation}\label{eq:Ce}
    \int_{\mathbb{T}^d}
    \Bigl(2C_\e - \tfrac{1}{2}\,d_{\mathbb{T}^d}(0,z)^2\Bigr)_+^{q-1}
    \dd z
    = \e^{q-1}q^{q-1}.
\end{equation}
For $\e$ small enough, such that $R_\e\coloneqq 2\sqrt{C_\e} < \frac{1}{2}$,
the geodesic ball $\mathbb{B}_{\mathbb{T}^d}(0, R_\e)$ does not intersect
the cut locus of $0$ in $\mathbb{T}^d$ and is isometric to the Euclidean ball
$\mathbb{B}(0, R_\e) \subset \R^d$.
In this setting, the integral in \eqref{eq:Ce} therefore reduces to a Euclidean one,
and converting to polar coordinates gives
\begin{align*}
    \int_{\mathbb{T}^d}
    \Bigl(2C_\e - \tfrac{1}{2}\,d_{\mathbb{T}^d}(0,z)^2\Bigr)_+^{q-1} \dd z
    &= \frac{2\pi^{d/2}}{\Gamma(d/2)}
       \int_0^{R_\e}
       \Bigl(2C_\e - \tfrac{1}{2}r^2\Bigr)^{q-1} r^{d-1} \;\dd r.
\end{align*}
Substituting $r = R_\e t$ and using $R_\e = 2\sqrt{C_\e}$,
\begin{align*}
    &= \frac{2\pi^{d/2}}{\Gamma(d/2)}
       \,(2C_\e)^{q-1}\, R_\e^d
       \int_0^1 (1-t^2)^{q-1} t^{d-1} \;\dd t.
\end{align*}
The remaining integral is evaluated via the substitution $s = t^2$,
\begin{equation*}
    \int_0^1 (1-t^2)^{q-1} t^{d-1}\;\dd t
    = \frac{1}{2}B\!\left(\tfrac{d}{2},\,q\right)
    = \frac{\Gamma(d/2)\,\Gamma(q)}{2\,\Gamma(d/2+q)}.
\end{equation*}
Substituting back and using $R_\e^d = 2^d C_\e^{d/2}$, equation \eqref{eq:Ce} becomes
\begin{equation*}
    \frac{\pi^{d/2}\,\Gamma(q)}{\Gamma(d/2+q)}
    \cdot 2^{q+d-1}
    \cdot C_\e^{q - 1 + d/2}
    = \e^{q-1} q^{q-1},
\end{equation*}
and solving for $C_\e$ yields
\begin{equation}\label{eq:Ce-explicit}
    C_\e
    = \left(
        \frac{\Gamma\!\left(\frac{d}{2}+q\right) q^{q-1}}
             {\,\pi^{d/2}\,\Gamma(q)\,2^{q+d-1}}
      \right)^{\!\frac{1}{q-1+d/2}}
      \e^{\,\frac{2}{d(p-1)+2}},
\end{equation}
where we used $q - 1 = \frac{1}{p-1}$ and
$q - 1 + \frac{d}{2} = \frac{d(p-1)+2}{2(p-1)}$ to simplify the exponent of $\e$.
In particular, $C_\e \approx \e^{\frac{2}{d(p-1)+2}}$ as $\e\to 0$,
and the support radius satisfies $R_\e = 2\sqrt{C_\e} \approx \e^{\frac{1}{d(p-1)+2}}$,
consistent with \Cref{thm:sparsity}.
\begin{remark}
    A similar explicit solution can be found for the self-transport problem on $\mathbb{S}^d$ with the marginals chosen to be the normalised surface measure on $\mathbb{S}^d$. 
\end{remark}

\appendix
\section{Reynolds'  transport theorem}\label{appendix}
For the reader's convenience we sketch here the proof of a slightly non-standard version of Reynolds'  transport theorem.
\begin{theorem}
Let $\Omega_0,\Omega_1\subset \R^d$ be bounded domains, and assume that $\Omega_1$
is Lipschitz. Let
\[
\mathcal R(x):=\{y\in \R^d:\ f(x,y)>0\}.
\]
Assume that:

\begin{itemize}
\item[(i)] for every convex $K_0\subset \Omega_0$ and $K_1\subset \Omega_1$, the map
$f$ is concave in $x$ on $K_0\times \Omega_1$ and concave in $y$ on $\Omega_0\times K_1$;

\item[(ii)] $|\mathcal R(x)\cap \Omega_1|>0$ for every $x\in \Omega_0$;

\item[(iii)] $f\in C^1(U)$ for some open set $U\subset \R^d\times \R^d$
containing
\[
\{(x,y): x\in \Omega_0,\ y\in \overline{\mathcal R(x)\cap \Omega_1}\};
\]

\item[(iv)] $h\in C^1(\overline{\Omega_1})$;

\item[(v)] $\mu$ admits a density, also denoted by $\mu$, which is bounded above and bounded away from
$0$ on $\Omega_1$.
\end{itemize}
For $s\ge 0$, define
\[
g(x):=\int_{\mathcal R(x)\cap \Omega_1} h(y)\,f(x,y)^s\,\dd\mu(y)
=\int_{\mathcal R(x)\cap \Omega_1} h(y)\,f(x,y)^s\,\mu(y)\,\dd y.
\]
Then $g\in W^{1,\infty}_{{\rm loc}}(\Omega_0)$ and, for almost every $x\in \Omega_0$,
\begin{multline}\label{eq:formula-smooth-domains}
   \nabla g(x)
=
\int_{\mathcal R(x)\cap \Omega_1}
h(y)\,s\,f(x,y)^{s-1}\,\nabla_x f(x,y)\,\dd\mu(y)
\\
+
\int_{\partial \mathcal R(x)\cap { \Omega_1}}
h(y)\,f(x,y)_+^s\,\langle V, n \rangle\,\mu(y)\,\dd\mathscr H^{d-1}(y), 
\end{multline}
where $n$ is the unit normal to the moving part of the boundary
$\partial \mathcal R(x)\cap { \Omega_1}$, and $V$ is its velocity field
as $x$ varies. (When $s=0$, the first term is understood to be absent. When $s>0$, the boundary term
vanishes because $f=0$ on $\partial\mathcal R(x)\cap { \Omega_1}$.)
\end{theorem}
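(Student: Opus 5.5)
The plan is to prove local Lipschitz regularity of $g$ first, and then identify the a.e. gradient by differentiating along coordinate directions. Concavity is what drives the whole argument.

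\textbf{Step 1: geometry of the sections.} By assumption (i), for fixed $x$ the set $\mathcal R(x)\cap K_1$ is convex for convex $K_1\subset\Omega_1$. Since $f$ is $C^1$ near the zero level set, we have $\nabla_y f\neq 0$ on $\partial\mathcal R(x)\cap\Omega_1$. Indeed, a concave function with $f=0$ and $\nabla_y f=0$ at a point would be maximized there, which contradicts $|\mathcal R(x)\cap\Omega_1|>0$ from (ii). Hence $\partial\mathcal R(x)\cap\Omega_1$ is locally a $C^1$ hypersurface with unit normal $n=-\nabla_y f/\|\nabla_y f\|$. By the implicit function theorem it moves with normal velocity
\[
\langle V,n\rangle = \frac{\nabla_x f}{\|\nabla_y f\|}.
\]
Compactness and continuity from (iii) give local uniform lower bounds on $\|\nabla_y f\|$ on these boundaries, at least on compact subsets of $\Omega_0$ where the boundary stays away from the degenerate set. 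The interaction with $\partial\Omega_1$ is handled through the Lipschitz assumption on $\Omega_1$.

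\textbf{Step 2: Lipschitz bound.} For $x,x'$ close, I would split
\[
g(x)-g(x') = \int_{\mathcal R(x)\cap\mathcal R(x')\cap\Omega_1} h\,(f^s(x,\cdot)-f^s(x',\cdot))\,\mu + \int_{\mathcal R(x)\triangle\mathcal R(x')} \cdots .
\]
The first term is $O(\|x-x'\|)$ by the mean value theorem when $s\ge 1$, or via $|a^s-b^s|\le |a-b|^s$ together with the smallness of $f$ near the boundary when $s<1$. For the symmetric difference, I would use the uniform lower bound on $\|\nabla_y f\|$: the symmetric difference lies in a tubular neighbourhood of $\partial\mathcal R(x)$ of width $O(\|x-x'\|)$. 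Its $\mathcal L^d$ measure is therefore bounded by $\mathcal H^{d-1}(\partial\mathcal R(x)\cap\Omega_1)\,\|x-x'\|$. The perimeter factor is uniformly bounded because convex subsets of a bounded set have bounded perimeter. Then Rademacher gives $g\in W^{1,\infty}_{\rm loc}$.

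\textbf{Step 3: the formula.} At a point $x$ of differentiability, I would compute the directional derivative $\lim_{t\to0}(g(x+te)-g(x))/t$ using the same decomposition. The bulk term converges by dominated convergence to the first integral, which is integrable for $s\in(0,1)$ since $f^{s-1}$ is integrable near a nondegenerate level set. The boundary layer is written in coordinates $y=\gamma(z)+r\,n(z)$ with $z\in\partial\mathcal R(x)$. Its contribution converges to $\int_{\partial\mathcal R\cap\Omega_1} h\,f_+^s\,\langle V,n\rangle\,\mu\,\dd\mathcal H^{d-1}$, by the coarea formula and continuity of $h\mu$. This requires $\mu$ to be continuous, or else the limit must be taken at Lebesgue points; I would use Lebesgue differentiation in the normal variable. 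For $s>0$ this contribution is $o(1)$, since $f^s\lesssim t^s$ in a layer of width $t$.

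I expect the main obstacle to be the part of $\partial\mathcal R(x)$ meeting $\partial\Omega_1$, together with possible degeneration of $\|\nabla_y f\|$ near corners. I would handle it by showing that the fixed part $\partial\Omega_1\cap\overline{\mathcal R(x)}$ contributes nothing, since its velocity is zero. The moving boundary near $\partial\Omega_1$ has $\mathcal H^{d-1}$-small measure by the Lipschitz graph structure, which justifies restricting the boundary integral to $\partial\mathcal R(x)\cap\Omega_1$.
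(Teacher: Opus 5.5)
\textbf{Gap: locally uniform nondegeneracy of $\nabla_y f$ is never proved.} Your outline (difference quotients, then Lipschitz, then Rademacher, then a bulk/boundary-layer split at differentiability points) would be a legitimate alternative to the paper's route. The paper instead exhausts $\Omega_1$ by smooth $\Omega_1^m\Subset\Omega_1$, applies the classical Reynolds theorem on $\mathcal R(x)\cap\Omega_1^m$, and passes to the limit in the weak formulation.

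Every quantitative step you use rests on a bound $\|\nabla_y f(x,y)\|\ge c_1>0$ for $y\in\partial\mathcal R(x)\cap\Omega_1$, uniform for $x$ near a fixed $x_0$ and valid up to $\partial\Omega_1$. You need it for:
\begin{itemize}
\item the estimate $|(\mathcal R(x)\triangle\mathcal R(x'))\cap\Omega_1|\lesssim\|x-x'\|$;
\item the boundedness of $\langle V,n\rangle=\partial_e f/\|\nabla_y f\|$;
\item the finiteness of $\mathcal H^{d-1}(\partial\mathcal R(x)\cap\Omega_1)$;
\item for $s\in(0,1)$, the integrability of $f^{s-1}$, which needs $f(x,y)\gtrsim\operatorname{dist}(y,\partial\mathcal R(x))$. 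The bound $|a^s-b^s|\le|a-b|^s$ alone only gives H\"older continuity.
\end{itemize}

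Your Step~1 argument is only pointwise, and it attributes the contradiction to (ii) incorrectly. Concavity holds only on convex subsets of $\Omega_1$, so positivity of $f$ somewhere in $\Omega_1$ does not contradict a critical point. What does work is a small ball $\mathbb B(y,r)\subset\Omega_1$, which contains points with $f>0$. That gives $\nabla_y f(x,y)\neq 0$ at each $y\in\Omega_1$, but no bound that survives as $y\to\partial\Omega_1$.

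``Compactness and continuity'' cannot supply one. The closure of $\partial\mathcal R(x)\cap\Omega_1$ contains points of $\partial\Omega_1$ where you have no nonvanishing at all; you yourself leave a ``degenerate set'' open. Your last paragraph does not close this either. Small $\mathcal H^{d-1}$-measure of the moving boundary near $\partial\Omega_1$ controls neither $\int h\mu\langle V,n\rangle\,\dd\mathscr H^{d-1}$ nor the measure of the symmetric difference if $\|\nabla_y f\|\to 0$ there.

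\textbf{The missing idea: combine (ii) and concavity quantitatively.} This is the first step of the paper's proof. Fix $x_0$ and write $\mathcal R(x_0)\cap\Omega_1$ as a finite union of domains $\omega_i$, each star-shaped with respect to a ball $\mathbb B(y_i,r_i)$. Shrink to a neighbourhood $N$ of $x_0$ on which $f(x,\cdot)\ge c_0>0$ on these balls and the decomposition persists for $\mathcal R(x)\cap\Omega_1$. Take $y\in\partial\mathcal R(x)\cap\Omega_1$ in the closure of the $i$-th piece. The convex hull of $y$ and $\mathbb B(y_i,r_i/2)$ is a convex subset of $\Omega_1$, so concavity of $f(x,\cdot)$ on it gives
\[
c_0\le f(x,y_i)-f(x,y)\le\langle\nabla_y f(x,y),y_i-y\rangle\le\|\nabla_y f(x,y)\|\operatorname{diam}(\Omega_1).
\]
Hence $c_1=c_0/\operatorname{diam}(\Omega_1)$ works uniformly in $x\in N$, including as $y$ approaches $\partial\Omega_1$. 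With this bound your Steps~2--3 can be carried out.

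\textbf{Secondary fix: the perimeter bound.} It should then come from the implicit function theorem plus compactness, or from the coarea formula, not from convexity. The set $\mathcal R(x)\cap\Omega_1$ is convex only when $\Omega_1$ is.
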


\begin{proof}
Fix $x_0\in \Omega_0$. Since $|\mathcal R(x_0)\cap \Omega_1|>0$ and
$\mathcal R(x_0)\cap \Omega_1$ is a Lipschitz domain, we may find a finite number $M$ of strongly star-shaped domains $\omega_i\ni y_i$ such that $\mathcal R(x_0)\cap \Omega_1 = \cup \omega_i$, i.e. there exists $y_i\in \omega_i$, $r_i>0$ such that $\omega_i$ is star-shaped with respect to each $y\in \mathbb B(y_i,r_i)$. By continuity of $f$, after shrinking to a neighborhood $N\Subset\Omega_0$ of $x_0$ there exists $c_0>0$ such that $\min_{i,y\in \mathbb B(y_i,r_i)} f(x,y)\geq c_0$ for all $x\in N$. For $x\in N$, write
\begin{align*}
\Gamma(x)\coloneqq \partial \mathcal R(x)\cap \Omega_1.
\end{align*}
Note that for $x\in N$,  $\mathcal R(x)\cap \Omega_1$ is a Lipschitz set and hence, we may write $\mathcal R(x)\cap \Omega_1=\cup \omega_i(x)$ for a finite collection of $M(x)$ star-shaped domains $\omega_i$. Since the map $x\to \mathcal R(x)\cap \Omega_1$ is continuous with respect to Hausdorff distance, we may ensure, upon reducing $N$ if required, that $M(x)=M$ and moreover, $B(y_i,r_i/2)\in \omega_i(x)$ for each $x\in N$ and $i\leq M$. 

Now, for $y\in \Gamma(x)$ let $i$ be such that $y\in \omega_i(x)$. Note that $Z(y)=\cup_{z\in B(y_i,r_i/2)} [z,y]\subset \omega_i$ is a convex subset of $\Omega_1$ containing $y$ and $y_i$. Consequently, due to the concavity of $y\to f(x,y)$ on $Z(y)$,
\[
f(x,y_i)-f(x,y)\le \langle \nabla_y f(x,y), y_i-y \rangle.
\]
Because $f(x,y)=0$ on $\Gamma(x)$ and $f(x,y_i)\ge c_0$, we obtain
\[
c_0\le \langle \nabla_y f(x,y), y_i-y \rangle \le \|\nabla_y f(x,y)\|\,\|y_i-y\|.
\]
Since $\Omega_1$ is bounded, $\|y_i-y\|\le \operatorname{diam}(\Omega_1)<\infty$, and therefore
\[
|\nabla_y f(x,y)|\ge c_1>0
\qquad\text{for all }x\in N,\ y\in \Gamma(x),
\]
for some constant $c_1$ depending only on $N$, $\{y_i\}$, $\{r_i\}$ and $\Omega_1$. In particular, the level set $\Sigma:=\{(x,y)\in N\times \Omega_1:\ f(x,y)=0\}$
is, by the implicit function theorem, a $C^1$ hypersurface in
$N\times \Omega_1$. Thus $\Gamma(x)$ is a $C^1$ hypersurface in $\Omega_1$,
depending $C^1$ on $x$.

Because $\nabla_y f$ is continuous and bounded away from $0$ on $\Sigma$,
after shrinking $N$ if needed there exist $\tau>0$ and constants $0<c_2\le C_2<\infty$
such that for every $x\in N$ and every
$y\in \mathcal R(x)\cap \Omega_1$ with $\operatorname{dist}(y,\Gamma(x))<\tau$,
\begin{equation}\label{eq:bound-distance-f}
    c_2\,\operatorname{dist}(y,\Gamma(x))
\le f(x,y)\le
C_2\,\operatorname{dist}(y,\Gamma(x)).
\end{equation}
Indeed, this follows from the mean value theorem along the normal segment joining
$y$ to its projection onto $\Gamma(x)$.
Fix an index $i\in\{1,\dots,d\}$. Now we claim that 
\begin{equation}
    \label{eq:Bound-Ai}
A_i(x):=
\int_{\mathcal R(x)\cap \Omega_1}
|h(y)|\,s\,f(x,y)^{s-1}\,|\partial_{x_i}f(x,y)|\,\dd\mu(y)
<\infty,
\end{equation}
for every $x\in N$ when $s>0$. We derive 
\begin{align*}
    A_i(x)=
&\int_{\mathcal R(x)\cap \Omega_1 \cap \{ \operatorname{dist}(y,\Gamma(x))\geq \tau\}}
|h(y)|\,s\,f(x,y)^{s-1}\,|\partial_{x_i}f(x,y)|\,\dd\mu(y) \\
&\quad+ \int_{\mathcal R(x)\cap \Omega_1 \cap \{ \operatorname{dist}(y,\Gamma(x))< \tau\}}
|h(y)|\,s\,f(x,y)^{s-1}\,|\partial_{x_i}f(x,y)|\,\dd\mu(y)\\
&\leq \|h\|_\infty s \tau^{s-1} \|\nabla f\|_\infty + \|h\|_\infty s \tau^{s-1} \|\nabla f\|_\infty  \int_{\mathcal R(x)\cap \Omega_1 \cap \{ \operatorname{dist}(y,\Gamma(x))< \tau\}}
f(x,y)^{s-1}\,\dd\mu(y) 
\end{align*}
and obtain \eqref{eq:Bound-Ai} as a consequence of \eqref{eq:bound-distance-f}. 

Next, consider the boundary term. Let $n$ be the unit normal to $\Gamma(x)$.
If $\gamma(t)$ is a curve on $\Sigma$ representing the motion of the free boundary when
$x_i$ varies, then differentiating the identity
$f(x+t e_i,\gamma(t))=0$ at $t=0$ yields
\[
\partial_{x_i}f(x,y)+\langle \nabla_y f(x,y), V_i(x,y) \rangle=0,
\]
where $V_i$ is the velocity of the boundary when $x_i$ varies.
Taking the scalar product with $n$ gives
\[
\langle V_i, n\rangle
=
-\frac{\partial_{x_i}f}{\langle \nabla_y f, n \rangle}.
\]
Since $|\langle \nabla_y f, n \rangle|=\|\nabla_y f\|\ge c_1$ on $\Gamma(x)$, we obtain $|\langle V_i, n \rangle|\le C$ on $\Gamma(x)$
for a constant $C$ independent of $x\in N$.
Hence
\begin{equation}
    \label{eq:bound-B-i}
    B_i(x):=
\int_{\Gamma(x)}
|h(y)|\,f(x,y)_+^s\,|\langle V_i, n\rangle|\,\mu(y)\,\dd\mathscr H^{d-1}(y)<\infty.
\end{equation}
(For $s>0$, this is in fact identically zero since $f=0$ on $\Gamma(x)$.)

Choose a sequence  of smooth sets $\{\Omega_1^m\}_m$ such that $\Omega_1^m\subset \Omega_1^{m+1} \Subset \Omega_1$ and $\bigcup_{m=1}^\infty \Omega_1^m=\Omega_1.$
Set
\[
D_m(x):=\mathcal R(x)\cap \Omega_1^m,
\qquad
g_m(x):=\int_{D_m(x)} h(y)\,f(x,y)^s\,d\mu(y).
\]
The moving part of $\partial D_m(x)$ is
\[
\Gamma_m(x):=\Gamma(x)\cap \Omega_1^m,
\]
while the part on $\partial\Omega_1^m$ is fixed in $x$. Because the integrability estimates \eqref{eq:Bound-Ai} and \eqref{eq:bound-B-i} hold uniformly on compact subsets of $N$,
dominated convergence yields, for every $x\in N$, 
\begin{equation}\label{eq:convergence-in-m-1}
g_m(x)\longrightarrow g(x), \qquad \text{as }\  m\to \infty,   
\end{equation}
and similarly
\begin{multline}\label{eq:convergence-in-m-2}
      \int_{D_m(x)}
h(y)\,s\,f(x,y)^{s-1}\,\partial_{x_i}f(x,y)\,\dd\mu(y)
\\\longrightarrow
\int_{\mathcal R(x)\cap \Omega_1}
h(y)\,s\,f(x,y)^{s-1}\,\partial_{x_i}f(x,y)\,\dd\mu(y),
\end{multline}
as well as
\begin{multline}\label{eq:convergence-in-m-3}
    \int_{\Gamma_m(x)}
h(y)\,f(x,y)_+^s\,\langle V_i, n\rangle\,\mu(y)\,\dd\mathscr H^{d-1}(y)
\\
\longrightarrow
\int_{\Gamma(x)\setminus \partial\Omega_1}
h(y)\,f(x,y)_+^s\,\langle V_i, n\rangle\,\mu(y)\,\dd\mathscr H^{d-1}(y),
\end{multline}
as $m\to \infty$ and 
for all $x\in N$. Therefore it is enough to prove \eqref{eq:formula-smooth-domains} on each smooth domain $\Omega_1^m$.
So, from now on, we may assume that the ambient domain is smooth.

Fix $m$ and $i\in\{1,\dots,d\}$.
Since the moving boundary $\Gamma_m(x)$ is $C^1$ in $x$ and the remaining part of
$\partial D_m(x)$ is fixed, Reynolds’ transport theorem (cf.~\cite[Appendix C.4.]{Evans}) gives
\[
\partial_{x_i} g_m(x)
=
\int_{D_m(x)}
\partial_{x_i}\!\big(h(y)\,f(x,y)^s\,\mu(y)\big)\,\dd y
+
\int_{\Gamma_m(x)}
h(y)\,f(x,y)_+^s\,\langle V_i, n\rangle\,\mu(y)\,\dd\mathscr H^{d-1}(y).
\]
Since $h$ and $\mu$ do not depend on $x$, we have, for $s>0$,
\[
\partial_{x_i}\!\big(h(y)\,f(x,y)^s\,\mu(y)\big)
=
h(y)\,s\,f(x,y)^{s-1}\,\partial_{x_i}f(x,y)\,\mu(y).
\]
Thus
\begin{multline}\label{eq:formula-g-m}
    \partial_{x_i} g_m(x)
=
\int_{D_m(x)}
h(y)\,s\,f(x,y)^{s-1}\,\partial_{x_i}f(x,y)\,\dd\mu(y)
\\+
\int_{\Gamma_m(x)}
h(y)\,f(x,y)_+^s\,\langle V_i, n\rangle\,\mu(y)\,\dd\mathscr H^{d-1}(y).
\end{multline}
Set $\varphi\in C_c^\infty(N)$.
For every $m\in \NN$,
\[
-\int_N g_m(x)\,\partial_{x_i}\varphi(x)\,\dd x
=
\int_N \varphi(x)\,\partial_{x_i}g_m(x)\,\dd x.
\]
Substituting \eqref{eq:formula-g-m}  and using the limits \eqref{eq:convergence-in-m-1}, \eqref{eq:convergence-in-m-2} and \eqref{eq:convergence-in-m-3},
together with dominated convergence, we obtain
\begin{align*}
-\int_N g(x)\,\partial_{x_i}\varphi(x)\,\dd x
&=
\int_N \varphi(x)
\int_{\mathcal R(x)\cap \Omega_1}
h(y)\,s\,f(x,y)^{s-1}\,\partial_{x_i}f(x,y)\,\dd\mu(y)\,\dd x \\
&\quad+
\int_N \varphi(x)
\int_{\partial \mathcal R(x)\setminus \partial\Omega_1}
h(y)\,f(x,y)_+^s\,\langle V_i, n\rangle\,\mu(y)\,\dd\mathscr H^{d-1}(y)\,\dd x.
\end{align*}
Hence, the distributional derivative $\partial_{x_i}g$ is represented by the right-hand
side above. Since this holds for every $i$, we conclude that
$g\in W^{1,1}_{\rm loc}(N)$ and
\begin{multline*}
    \partial_{x_i} g(x)
=
\int_{\mathcal R(x)\cap \Omega_1}
h(y)\,s\,f(x,y)^{s-1}\,\partial_{x_i}f(x,y)\,\dd\mu(y)
\\+
\int_{\partial \mathcal R(x)\setminus \partial\Omega_1}
h(y)\,f(x,y)_+^s\,\langle V_i, n\rangle\,\mu(y)\,\dd\mathscr H^{d-1}(y),
\end{multline*}
for almost every $x\in N$. Since $x_0\in \Omega_0$ was arbitrary, we conclude that $g\in W^{1,1}_{\rm loc}(\Omega_0)$. From the estimates \eqref{eq:Bound-Ai} and \eqref{eq:bound-B-i}, we derive $g\in W^{1,\infty}_{\rm loc}(\Omega_0)$. The result follows.  
\end{proof}

\bibliographystyle{amsalpha}
\bibliography{biblio}

 \end{document}